\newtheorem{theo}{Theorem}[section]
\newtheorem{defi}{Definition}[section]
\newtheorem{cor}{Corollary}[section]
\newtheorem{lem}{Lemma}[section]
\newtheorem{prop}{Proposition}[section]
\newtheorem{rem}{Remark}[section]
\newtheorem{ass}{Assumption}[section]
\newtheorem*{ack}{Acknowledgement}
\newcommand{\Nset}{\mathbb{N}}
\newcommand{\PP}{\mbox{$\mathbb{P}$}}
\newcommand{\CA}{\mathcal{A}}
\newcommand{\xRightarrow}[2][]{\ext@arrow 0359\Rightarrowfill@{#1}{#2}}
\newcommand{\longhookrightarrow}{}
\DeclareRobustCommand{\longhookrightarrow}{\lhook\joinrel\relbar\joinrel\rightarrow}
\newcommand{\llpref}{\overleftarrow{\rm pref}\,}
\newcommand{\ttu}{\mathtt u}
\newcommand{\ttd}{\mathtt d}
\newcommand{\ttN}{\mathtt n}
\newcommand{\ttS}{\mathtt s}
\newcommand{\ttE}{\mathtt e}
\newcommand{\ttW}{\mathtt w}
\newcommand{\ttL}{\ell}
\newcommand{\tail}{\mathcal{T}}
\title{\bf Recurrence of Multidimensional Persistent Random Walks. Fourier and Series Criteria}
\author[1]{{\bf Peggy Cénac}}
\author[2]{{\bf Basile de Loynes}}
\author[1]{{\bf Yoann Offret}}
\author[1]{{\bf Arnaud Rousselle}}
\affil[1]{Institut de Mathématiques de Bourgogne (IMB) - UMR CNRS 5584\\

Université de Bourgogne Franche-Comté, 21000 Dijon, France}
\affil[2]{
Ecole Nationale de la Statistique et de l’Analyse de l’Information (ENSAI)\\

Campus de Ker-Lann, rue Blaise Pascal, BP 37203, 35172 Bruz cedex, France}
\date{}
\begin{document}


\maketitle


\noindent
{\bf Abstract} 
The recurrence features of persistent random walks built from variable length Markov chains are investigated. We observe that these stochastic processes can be seen as Lévy walks for which the persistence times depend on some internal Markov chain:  they admit Markov random walk skeletons. A recurrence versus transience dichotomy is highlighted. We first give a sufficient Fourier criterion for the recurrence, close to the usual Chung-Fuchs one,  assuming in addition  the positive recurrence of the driving chain and a series criterion is derived. The key tool is the Nagaev-Guivarc'h method. Finally, we focus on particular two-dimensional persistent random walks, including directionally reinforced random walks, for which necessary and sufficient Fourier and series criteria are obtained.  Inspired by \cite{Rainer2007}, we produce a genuine counterexample to the conjecture of \cite{Mauldin1996}. As for the one-dimensional situation studied in  \cite{PRWI}, it is easier for a persistent random walk than its skeleton to be recurrent but here the difference is extremely thin. These results are based on a surprisingly novel -- to our knowledge -- upper bound  for the Lévy concentration function associated with symmetric distributions.\\


\noindent
{\bf Key words} {Persistent random walks {\bf .\@} Variable length Markov Chains {\bf .\@} Markov random walks {\bf .\@} Fourier and series recurrence criteria  {\bf .\@} Fourier perturbations {\bf .\@}  Markov operators   {\bf .\@} Concentration functions}\\


\noindent
{\bf Mathematics Subject Classification (2000)} {60J15 {\bf .\@} 60G17 {\bf .\@} 60J05 {\bf .\@} 37B20 {\bf .\@} 60K99  {\bf .\@} 60E05 {\bf .\@} 47A55 
} 


{\small
\tableofcontents
}


\section{Introduction}

Classical random walks are usually defined from a sequence of independent and identically distributed {\it i.i.d.\@} increments $\{X_k\}_{k\geq 1}$ by $S_0=0$ and for every $n\geq 1$,
\begin{equation}
\label{def-persist-part}
S_n:=\displaystyle\sum_{k=1}^n X_k.
\end{equation}

In  the continuity of \cite{PRWI} we aim at investigating  the asymptotic behaviour, and more specifically the recurrence features, of a multidimensional Persistent Random Walk (PRW) for which the increments are driven by a Variable Length Markov Chain (VLMC)  built from some probabilized context tree. This construction furnishes a wide class of models for the dependence of the increments which can be easily adapted to various situations. The toy model in \cite{PRWI} corresponds to the case of a VLMC built from a double-infinite comb and increments belonging to $\{-1,1\}\subset\mathbb Z$.

The characterization of the recurrent versus transient behaviour is difficult for a general probabilized context tree (see \cite{ccpp} for some zoology for instance). Before investigating a larger class of models,  we focus in Section \ref{quadruple} on a particular context tree  generalizing in $\mathbb Z^2$ the double-infinite comb already studied. The latter, naturally called a quadruple-infinite comb -- likewise, the resulting PRW is called the quadruple-infinite comb PRW -- is mainly motivated by two reasons. 

From this particular case, we point out in Section \ref{alphagis}  that such PRW can be seen -- rather generically -- as a continuous-time Markov Random Walks (MRW) called in the sequel a Markov Lévy Walk (MLW). This representation has motivated our will to extend the recurrence and transience criteria to this largest and worthwhile class of persistent stochastic processes.

Besides, another motivation was to answer to the conjecture \cite[Section 3., p.247]{Mauldin1996} related to  Directionally Reinforced Random Walks (DRRWs) in $\mathbb Z^2$. Those are in particular quadruple-infinite comb  PRWs for which the {\it i.i.d.\@} waiting times in $\{1,2,\cdots\}$ do not depend on some internal Markov chain  and the successive directions (four possibilities) are chosen uniformly among all excepted the last one (thus three uniform choices). The authors in \cite{Rainer2007} have partially answered by the negative to this guess and this question is definitively closed in this paper.


\subsection{The quadruple-infinite comb model}

\label{quadruple}

Let us start with the general construction of VLMCs built from a probabilized context tree  on the  alphabet $\mathcal{A}:=\{\ttE,\ttN,\ttW, \ttS\}$. In the sequel, we associate with every $\ell\in\mathcal A$ the corresponding direction in $\mathbb Z^2$ in such a way that $(\overrightarrow{\mathtt e},\overrightarrow{\mathtt n})$ stands for the canonical basis whereas $(\overrightarrow{\mathtt w},\overrightarrow{\mathtt s})$ is the opposite one. Hence, the letters $\ttE$, $\ttN$, $\ttW$ and $\ttS$ will stand for moves to the east, north, west and south respectively.

Let $\mathcal{L}= \CA^{-\Nset}$ be the set of left-infinite words  and  consider a complete tree on $\mathcal{A}$: each node has $0$ or ${\rm card}(\mathcal A)$ children. The set of leaves is denoted by $\mathcal{C}$ and elements of $\mathcal C$ are (possibly infinite) words on $\mathcal A$. To each leaf $c\in\mathcal C$, called a context, is attached a  distribution $q_{c}$ on $\mathcal A$. Endowed with this probabilistic structure, such a tree is named a probabilized context tree. The related VLMC -- here denoted by $\{U_{n}\}_{n\geq 0}$ -- is the Markov Chain on $\mathcal{L}$ whose transitions are given  by 
\begin{equation}
 \label{eq:def:VLMC}
 \PP(U_{n+1} = U_n\ell| U_n)=q_{\footnotesize\llpref (U_n)}(\ell),
 \end{equation}
where $\llpref(w)\in\mathcal C$ is defined as the shortest prefix of $w=\cdots w_{-1}w_{0}$, read from right to left, appearing as a leaf of the context tree. The $k$th increment $X_k$ of the corresponding PRW is identified with the rightmost letter of $U_k$. In particular, we can write 
\begin{equation}
U_{n}=\cdots X_{n-1}X_{n}.
\end{equation}

The set of leaves of the quadruple-infinite comb encodes the memory of the VLMC and consists of words on the alphabet $\mathcal A$ of the form
\begin{equation}\label{leaf}
\mathcal{C}:=\left\{\ell^n\ell^{\prime} : \ell\neq \ell^{\prime}\in \mathcal A,\; n\ge 1\right\}\cup \{\ell^\infty : \ell\in \mathcal A\}.
\end{equation} 
The prefix function is then formally defined by  
\begin{equation}
 \llpref (\cdots \ell^{\prime}\ell^{n})=\ell^n\ell^{\prime}\quad\mbox{and}\quad \llpref (\ell^{\infty})=\ell^{\infty}.
\end{equation}
One can summarize  the probabilistic structure as follows: for  $n\geq 1$, $\ttL,\ttL^{\prime},\ttL^{\prime\prime}\in\mathcal A$ with $\ell^\prime\neq \ell$ and $\ell^{\prime\prime}\neq \ell$, introduce $\alpha_n({\ttL^{\prime},\ttL})$ and $p_{n}((\ttL^{\prime},\ttL);(\ttL,\ttL^{\prime\prime}))$ so that 
\begin{equation}
\label{eq:tree}
q_{\ttL^n \ttL^{\prime}}(\ttL)  = 1-\alpha_n({\ttL^{\prime},\ttL})\quad\mbox{and}\quad
q_{\ttL^n \ttL^{\prime}}(\ttL^{\prime\prime})
=\alpha_{n}(\ttL^{\prime},\ttL)\, p_{n}((\ttL^{\prime},\ttL);(\ttL,\ttL^{\prime\prime})).	
\end{equation}
Also introduce $\alpha_{\infty}(\ttL,\ell)$ and $p_{\infty}((\ttL,\ttL);(\ttL,\ttL^{\prime\prime}))$ with
\begin{equation}\label{eq:tree2}
q_{\ell^\infty}(\ell)=:1-\alpha_{\infty}(\ttL,\ell)\quad\mbox{and}\quad{}
q_{\ell^\infty}(\ell^{\prime\prime})=:\alpha_{\infty}(\ttL,\ell)\,p_{\infty}((\ttL,\ttL);(\ttL,\ttL^{\prime\prime})).
\end{equation}
These quantities are interpreted as transition probabilities -- see Figure \ref{diag} -- and characterize the probabilized context tree.

\begin{figure}[!h]
\centering
\begin{tikzpicture}
\tikzstyle{local}=[rectangle,draw,rounded corners=4pt]
\node[local] (etat00) at (-6,0) {$\ttL^\infty$};
\node[local] (etat01) at (-2.5,1.25) {$\ttL^\infty$};
\node[local] (etat02) at (-2.5,-1.25) {$\ttL^\infty\ell^{\prime\prime}$};
\draw[->,>=latex] (etat00.east) -- (etat01.west) node[midway,fill=white]{\footnotesize $1-\alpha_\infty(\ell,\ell)$};
\draw[->,>=latex] (etat00.east) -- (etat02.west) node[midway,fill=white]{\footnotesize $\alpha_\infty(\ell,\ell)p_\infty((\ell,\ell);(\ell,\ell^{\prime\prime})$};

\node[local] (etat0) at (-1,0) {$\cdots \ttL^\prime \ttL^n$};
\node[local] (etat1) at (4,1.25) {$\cdots\ell^\prime\ell^{n+1}$};
\node[local] (etat2) at (4,-1.25) {$\cdots\ell^\prime\ell^{n}\ell^{\prime\prime}$};

\draw[->,>=latex] (etat0.east) -- (etat1.west) node[midway,fill=white]{\footnotesize $1-\alpha_n(\ell^\prime,\ell)$};
\draw[->,>=latex] (etat0.east) -- (etat2.west) node[midway,fill=white]{\footnotesize $\alpha_n(\ell^\prime,\ell)p_n((\ell^\prime,\ell);(\ell,\ell^{\prime\prime})$};
\end{tikzpicture}
\caption{\label{diag} Transitions probabilities of the quadruple-infinite comb}
\end{figure}
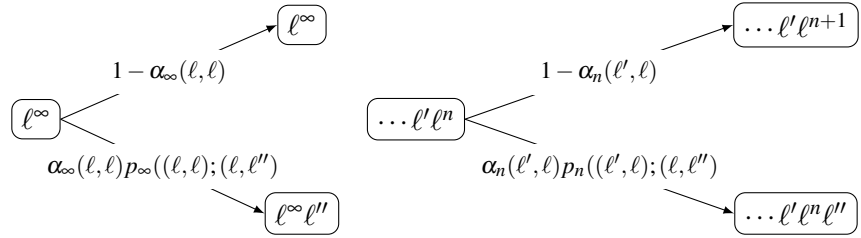

\begin{rem}
 Note that the probability for a change of direction depends on the time spent in the current direction but also, contrary to the one-dimensional PRWs, on the previous direction.
\end{rem}


\begin{figure}[!b]
\begin{center}
\includegraphics[width=88mm]{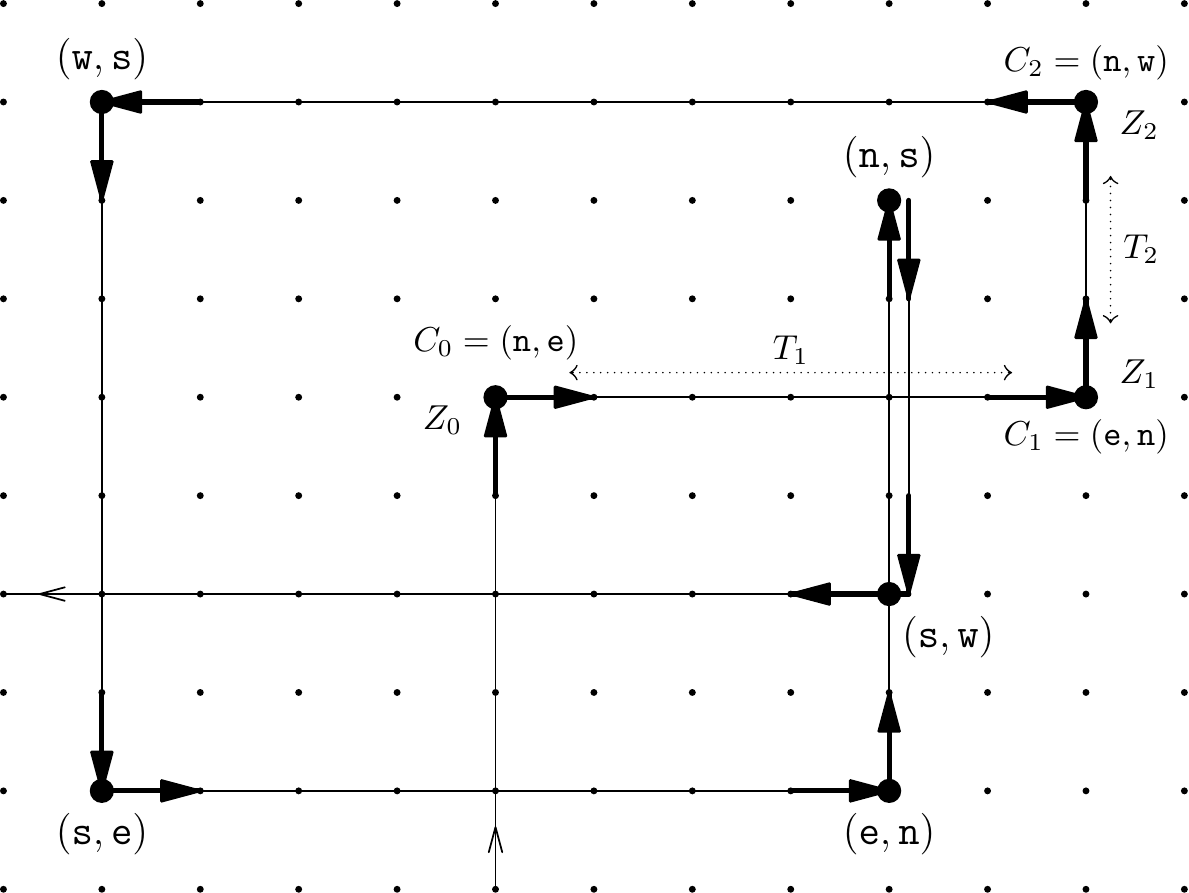}
\end{center}
\caption{Piecewise interpolation of the PRW}
\label{marche}
\end{figure}

In the following, we refer carefully to Figure \ref{marche} below that illustrates our notations and assumptions by a realization of a linear interpolation $\{S_t\}_{t\geq 0}$ of a quadruple-infinite comb PRW.


\subsubsection{Associated MRW}

\label{mrw}

Let $P$ be the Markov kernel on $\mathcal A\times \mathcal A$  defined for every $\ell^{\prime},\ell,\ell^{\prime\prime}\in\mathcal A$ with $\ell^{\prime}\neq\ell$ and $\ell^{\prime\prime}\neq \ell$ by
\begin{equation}\label{markovsymb1}
P((\ell^{\prime},\ell);(\ell,\ell^{\prime\prime})):=\sum_{n=1}^{\infty}\left(\prod_{k=1}^{n-1}(1-\alpha_{k}(\ell^{\prime},\ell))\right)\alpha_{n}(\ell^{\prime},\ell)p_{n}((\ell^{\prime},\ell);(\ell,\ell^{\prime\prime})),
\end{equation}
and
\begin{equation}\label{markovsymb2}
P((\ell,\ell);(\ell,\ell^{\prime\prime})):= \sum_{n=1}^{\infty}(1-\alpha_{\infty}(\ell^{},\ell))^{n-1}\alpha_{\infty}(\ell^{},\ell)p_{\infty}((\ell^{},\ell);(\ell,\ell^{\prime\prime})).	
\end{equation}
To get a stochastic matrix, we choose adequately the entries $P((\ell^{\prime},\ell);(\ell,\ell))$ and $P((\ell,\ell);(\ell,\ell))$ when necessary  and set to zero any others. For the sake of simplicity, it is supposed in the sequel the following assumption. 


\begin{ass}[]\label{A1} 
One has $(X_{0},X_{1})=(\ttN,\ttE)$ with probability one and the state $(\ttN,\ttE)$ belongs to an irreducible class $\mathcal S\subset \mathcal A\times\mathcal A\setminus \Delta$ of the Markov kernel $P$ where $\Delta\subset \mathcal A\times \mathcal A$ is the diagonal subset.
\end{ass}

Obviously, regarding the study of the asymptotic behaviour of the PRW, there is no loss of generality assuming such conditions. Note that, under this assumption,  for every $c\in \mathcal S$, 
\begin{equation}\label{ainit3}
 \sum_{n=1}^{\infty}\left(\prod_{k=1}^{n-1}(1-\alpha_{k}(c))\right)\alpha_{n}(c)=1.
\end{equation}
Roughly speaking, this assumption disallows a too strong reinforcement, that is  a too fast decreasing rate for the transition probabilities $\alpha_{n}(c)$ of changing directions. As a matter of fact, the transition probabilities between two changes of letters -- named  breaking or moving times -- are encoded by the Markov kernel $P$. In fact,  let $\{B_n\}_{n\geq 0}$ be the almost surely finite breaking times defined inductively by 
\begin{equation}
B_0=0\quad\mbox{and}\quad
B_{n+1}=\inf\left\{k>B_{n} : X_{k}\neq X_{k+1}\right\}.
\end{equation}
It turns out that the so called internal (configuration or driven) chain $\{C_n\}_{n\geq 0}$ defined by 
\begin{equation}\label{drivingchain}
C_{n}:=(X_{B_{n}},X_{B_{n+1}}),
\end{equation}
is an irreducible Markov chain  on  $\mathcal S$ starting from $(\ttN,\ttE)$ whose Markov kernel -- still denoted by $P$ abusing notation -- is the restriction of $P$ to $\mathcal S\times \mathcal S$.

The waiting times $T_{n+1}:=B_{n+1}-B_n$ are not independent contrary to the one-dimensional case. However, conditionally to the events $\{C_{n}=c,C_{n+1}=s\}$, they share the same distribution. The skeleton random walk -- the  PRW observed at the breaking times -- $\{Z_{n}\}_{n\geq 0}$ on $\mathbb Z^2$ is then defined as 
\begin{equation}
Z_n:=S_{B_n}=\sum_{i=1}^{n}\left(\sum_{k=T_{i-1}+1}^{T_{i}} X_k\right),
\end{equation}
where $T_0=0$. Obviously, $Z$ is not a RW. Nevertheless, taking into account the additional information given by the internal Markov chain, $Z$ is rather a  MRW,  also named  a Markov Additive Process (MAP), semi-Markov process or hidden Markov chain (see  \cite{Alsmeyer,Barbu2008} for instance). To be more specific, it means the process $\{(Z_n,C_n)\}_{n\geq 0}$ is Markovian on $\mathbb Z^2\times \mathcal S$ and satisfies 
\begin{equation}\label{MAPTRANS}
\mathcal L\big((Z_{n+1}-Z_n,C_n) \mid \{(Z_k,C_k)\}_{0\leq k\leq n}\big)=\mathcal L\big((Z_{n+1}-Z_n,C_n) \mid C_n\big).
\end{equation}
Here the latter conditional distributions do not depend on $n\geq 0$.

Thereafter, we  introduce  for every $c,s\in \mathcal S$ such that $P(c,s)>0$ the conditional jump and waiting time distributions
\begin{equation}
\mu_{c,s}(dx):=\mathbb P\left(Z_{n+1}-Z_n \in dx \,|\,C_n=c, C_{n+1}=s\right)\quad\mbox{and}\quad \mu_c(dt):=\sum_{s\in\mathcal S}P(c,s)\mu_{c,s}(dx),
\end{equation} 
and 
\begin{equation}\label{persist}
\nu_{c,s}(dt)=\mathbb P(T_{n+1}\in dt\mid C_n=c,C_{n+1}=s)\quad\mbox{and}\quad 
\nu_{c}(dt)=\sum_{s\in\mathcal S}P(c,s)\nu_{c,s}(dt).
\end{equation}
Here, writing a configuration $c$ as $(c_{\mathtt i},c_{\mathtt o})$, we get  $ \mu_{c,s}(n\overrightarrow {c_{\mathtt o}})=\nu_{c,s}(n)$ and $\mu_c(n\overrightarrow {c_{\mathtt o}})=\nu_c(n)$, the latter denoting the distribution of the $n$th term of the summand in (\ref{ainit3}) and the former equals to 
\begin{equation}
\label{loi-cond}
\frac{p_{n}(c;s) \alpha_{n}(c)\prod_{k=1}^{n-1}(1-\alpha_{k}(c))}{P(c,s)}.
\end{equation}
Hence, $\mu_{c,s}(dx)$ and $\mu_{c}(dx)$ can be viewed as conditional vectorial persistence time distributions in which are simultaneously encoded the length and the direction.


\subsubsection{A generic MLW structure for PRWs}

\label{alphagis}

\label{MLW}

Therefore, at the sight of the considerations above,  a PRW can be constructed as follows:

\begin{itemize}
\item introduce a Markov chain  $\{C_n\}_{n\geq 0}$  on $\mathcal S$ with transition kernel $P$; 
\item consider independent sequences of {\it i.i.d.\@} random variables $\{(\tau_{n}(c,s),\overrightarrow{\tau_{n}}(c,s))\}_{n\geq 1}$ -- themselves independent of $\{C_n\}_{n\geq 0}$  -- taking  values in $\{1,2,\cdots\}\times{\mathbb Z}^2$ for every $c,s\in\mathcal S$ and  whose respective distributions are the push-forward images of $\mu_{c,d}$ by $x\longmapsto (\|x\|,x)$;
\item the piecewise linear interpolation of the initial discrete-time PRW is then given by  
\begin{equation}\label{PRW1}
S_t:=\sum_{n=1}^{N(t)}\overrightarrow{\tau_n}(C_{n-1},C_n)+\left(t-B_{N(t)}\right)\overrightarrow{\tau_{N(t)+1}}(C_{N(t)},C_{N(t)+1}),
\end{equation}
with
\begin{equation}\label{PRW2}
N(t):=\max\left\{n\geq 0 : B_n\leq t\right\}\quad\mbox{and}\quad B_n:=\tau_1(C_0,C_1)+\cdots+\tau_n(C_{n-1},C_n);
\end{equation}
\item and the  skeleton MRW is obtained setting 
\begin{equation}\label{ctrw}
Z_n:=S_{B_n}=\sum_{k=1}^{n}\overrightarrow{\tau_k}(C_{k-1},C_k).
\end{equation}

\end{itemize}

Following the terminology of  \cite{Magdziarz}, the continuous-time persistent process $\{S_t\}_{t\geq 0}$ is virtually a Lévy Walks (LW), except that the waiting times, as well as the jumps, are no longer {\it i.i.d.\@} nor even independent. Original LWs are basically Continuous Time Random Walks (CTRWs) -- the summand in (\ref{PRW1}) when the distributions $\mu_{c,s}$ do not depend on $c,s\in\mathcal S$ -- for which the waiting times and the sizes of jumps are coupled and usually proportional. In our context, the continuous interpolation  (\ref{PRW1}) of the skeleton MRW is called a Markov Lévy Walk (MLW) to fit the denomination of the skeleton MRW and the LW structure. As well explained in \cite{Magdziarz} and also in \cite{Meer1,Meer2,MeerStra,Straka}, these kind of stochastic processes model a wide panoply of phenomena involving anomalous diffusions.

Obviously, there are many other possible choices for the internal chain, all of them leading to different cutting of the trajectories of the original PRW. For instance, remarking that a PRW is an additive functional of the underlying VLMC, one may choose for internal chain the VLMC itself. The jump distributions are then deterministic. With this choice, the geometry of the ambient space and the symmetries are forgotten. Somehow, the valuable information is entirely encoded in the internal chain, \emph{i.e.\@} the VLMC. In fact, it would be wise to find  a tradeoff between the complexity of the internal chain and that of the jump distributions.

Actually, the choice intuitively made so far for the double or the quadruple-infinite comb PRW can be generically achieve using the notion of Greatest Internal Suffix (GIS) defined in \cite{CCPP3}. Keeping the notation introduced in Sections \ref{quadruple}, consider $\theta$ the left-shift operator defined  for every (possibly infinite) words  $\omega:=\omega_1\omega_2\cdots$ on the alphabet $\mathcal A$  by $\theta(\omega)=\omega_2\omega_3\cdots$ and set for every context $c\in\mathcal C$,
\begin{equation}
{\rm \alpha gis}(c):=\theta^{\tau(c)-1}(c)\in\mathcal C,\quad\mbox{with}\quad \tau(c):=\inf\{ n\geq 1 : \theta^{n}(c)\notin\mathcal C\}.
\end{equation} 
The word ${\rm \alpha gis}(c)$ is said to be the $\alpha$-GIS associated with $c$ in the sense that ${\rm gis}(c):=\theta({\rm \alpha gis}(c))$ is the longest suffix appearing as an internal node of the context tree. We denote by $\mathcal G \subset\mathcal C$ the set of $\alpha$-GISs. The latter is a good candidate for the  internal state space.  For this choice, we retrieve for instance $\{\ttu\ttd,\ttd\ttu\}$ and $\{\mathtt e,\mathtt n,\mathtt w,\mathtt s\}^2$ for the double and  the quadruple-infinite comb models respectively. 

To go further, define inductively the sequence of breaking times as follows 
\begin{equation}
B_0=0\quad\mbox{and}\quad B_{n+1}:=\inf\big\{k>B_n : {\rm \alpha gis}(\llpref U_{k})\neq {\rm \alpha gis}(\llpref U_{B_n})\big\},
\end{equation}
and set as previously $T_{n+1}:=B_{n+1}-B_n$ and $T_0=0$ but also
\begin{equation}
C_n:={\rm gis}(\llpref U_{B_n})\quad\mbox{and}\quad  Z_n:=S_{B_n}=\sum_{i=1}^n\left(\sum_{k=T_{i-1}+1}^{T_i}X_k\right).
\end{equation}
Then assuming $\llpref U_{0}\in\mathcal G$, it turns out that $\{(Z_n,C_n)\}_{n\geq 0}$ is a MRW skeleton of the PRW and the latter  can be recovered adding the information given by the conditional excursions
\begin{equation}
{\mathbf e_{g,h}}(d\xi):=\mathbb P\left(n\longmapsto\sum_{k=T_{i-1}+1}^{n\wedge T_{i}} X_k \in d\xi\Bigg| C_0=g,C_1=h \right).
\end{equation}

\begin{rem}
  There is no reason for a context tree to admit a finite set of GISs. That is why, in the sequel, internal Markov chains evolving in a possibly infinite countable state space are considered. It is worth noting these considerations are not artificial: consider for instance a one-dimensional PRW with increments in $\{ -1,1 \}$ whose memory is encoded through the length of last rise together with the length of the last descent.

\end{rem}


\subsection{Overview of the article}


Foremost, note that in Section \ref{Results}  is considered a general  MLW on $\mathbb R^d$, $d\geq 1$. Such processes are easily defined adapting slightly the construction in Section \ref{MLW}. 

In Section \ref{dichotomy}, it is first proved that the MLW, as well as its embedded skeleton MRW, are either recurrent or transient supposing the internal Markov chain is recurrent (Proposition \ref{dicho}). If in addition the internal Markov chain is supposed positive recurrent, then it is shown that $Z$ is recurrent if and only some series is infinite as for classical RWs (Proposition \ref{dicho2}). This characterization consists in extending a result of \cite{Alsmeyer} to multidimensional MRWs.

In Theorem~\ref{Zcriterion} of Section~\ref{secFourier} are stated Fourier and Series criteria characterizing the type (recurrent or transient) of the skeleton MRW.  Eventhough, the proof of this result basically follows the ideas of the Nagaev-Guivarc'h perturbation method, it is worth noting that no moment conditions are assumed so that virtually all kind of jump distributions can be considered. Indeed, for our purpose, we consider situations where there is no probability invariant measure for the VLMC $(U_n)$ (see for instance \cite{peggy}). 

Some probabilistic and operator Assumptions \ref{assprob} and \ref{assop} together with some Sector Assumption \ref{sector} are obviously required and mostly relevant in the case of an infinite internal state space. The analytic criterion (\ref{fourier}) is in a first approximation nothing but the classical Chung-Fuch criterion for some averaged, classical  random walk (Remark \ref{approx}). The usual characteristic function is replaced by the principal eigenvalue of some Fourier perturbation associated with the internal Markov operator. This principal eigenvalue admits the series expansion \eqref{expans}. Because of the different nature of Fourier analysis in the lattice and non lattice cases, this section only deals with MRW taking values in $\mathbb Z^d$.

Unfortunately, in view of \cite[Theorem 4., p.  684]{Rainer2007}, Theorem \ref{Zcriterion} only gives a sufficient criterion for the recurrence of a MLW. The result in \cite{Rainer2007} also answers nearly by the negative to the conjecture about two-dimensional DRRWs in \cite[Section 3., p.247]{Mauldin1996}. Informally, it is asked whether a DRRW is recurrent simultaneously with the RW defined as the DRRW observed at the successive times of returns in its initial direction. As already pointed out by the authors, the given  example in \cite{Rainer2007} do no fit well to the usual framework of DRRW  since their waiting times can be equal to zero with a positive probability. This may appear anecdotal, however,  their ingenious and technical construction involves in a crucial way unimodality arguments that can not be applied for true DRRWs. Nevertheless, it still provides a counter-example for our general MLWs and related MRW skeletons.

In Section \ref{criteriacomb}, returning to the quadruple-infinite comb model, a complete characterization of the recurrence of PRWs is stated in Proposition \ref{coro}. For this specific model, the margins of the resulting skeleton are independent symmetric one-dimensional RWs. The admissible probabilistic structure is detailed in Assumptions \ref{modelrestrein} and includes DRRWs.

\begin{rem}
In \cite[Theorem 2., p. 682]{Rainer2007}, it is proved that DRRWs are transient in $\mathbb Z^d$ for $d\geq 3$. As far as the type problem is concerned, the higher dimensional cases modeled on Assumption \ref{modelrestrein} seem to be irrelevant and is not investigated in this paper.
\end{rem}

Our results are based on the fundamental Lemma  \ref{lévysym} and Theorem \ref{critère} involving an appropriate Borel-Cantelli Lemma. Let us stress that, to our knowledge, the result in Lemma \ref{lévysym} is surprisingly not mentioned anywhere. At the end of this section, the conjecture \cite[Section 3., p.247]{Mauldin1996} for DRRWs, and actually for a wider class of two-dimensional PRWs, is definitively answered by the negative. We follow the  constructive probabilistic approach presented in \cite{Rainer2007}, excepting that the unimodality assumption is dropped requiring the important Lemma \ref{lévysym}.

Finally, the two remaining sections are devoted to the proofs of Propositions \ref{eigenvalue} and \ref{expansprop} and Theorem \ref{Zcriterion} of Section \ref{prooffourier}, the fundamental Lemma \ref{lévysym} and its consequences  in Theorem \ref{critère}, of Corollary \ref{coro} and Theorem \ref{conjecture} of Section \ref{proofcomb}.

\section{Recurrence and transience criteria}

\setcounter{equation}{0}

\label{Results}

In this section, we consider a Markov chain $\{C_n\}_{n\geq 0}$ on a discrete and countable state space $\mathcal S$ whose Markov kernel is denoted by $P$. Also, we denote by $\pi(dc)$ a corresponding invariant measure, normalized to be a probability when possible. Let $\{\tau_{n}(c,s),\overrightarrow{\tau_{n}}(c,s)\}_{n \geq 0}$  be a sequence of \emph{i.i.d.} random variables taking values in $[0,\infty)\times \mathbb R^d$ whose common distribution, depending only on $(c,s)$, is denoted by $m_{c,s}(dt,dx)$. Moreover, let us introduce the first and  second marginal distributions of $m_{c,s}$ denoted respectively by $\nu_{c,s}(dt)$ and $\mu_{c,s}(dx)$  and set 
\begin{equation}\label{conditionaljump}
\mu_c(dx):=\sum_{s\in\mathcal S}P(c,s)\mu_{c,s}(dx)\quad\mbox{and}\quad \nu_c(dt):=\sum_{s\in\mathcal S}P(c,s)\mu_{c,s}(dt).
\end{equation}

Thereafter, one can construct as in (\ref{PRW1})-(\ref{ctrw}) above a MLW denoted by $\{S_t\}_{t\geq 0}$ evolving in $\mathbb R^d$ whose skeleton $\{Z_n\}_{n\geq 0}$ is a MRW coupled with $C$ as an internal Markov process. In order to ensure the continuity of $S$, it is assumed that, for every $c,s\in\mathcal S$ with $P(c,s)>0$,
\begin{equation}
m_{c,s}(\{0\}\times (\mathbb R^d\setminus \{0\}))=0.
\end{equation}

In the sequel $\mathbb P_c$ (\emph{resp.} $\mathbb P_\nu$) denotes the probability distribution on the path space conditionally to $C_0=c$ (\emph{resp.} $C_0$ is  distributed as $\nu$) and $S_0=Z_0=0$.


\subsection{Dichotomy results}

\label{dichotomy}

The following proposition states a zero-one law for MLWs and MRWs leading to the standard dichotomy between recurrence \emph{versus} transience. 

\begin{prop}[zero-one law and dichotomy recurrence/transience]\label{dicho} Assume that the internal Markov chain $C$ is irreducible and recurrent. Then, for any  $c\in\mathcal S$ and any Borel subset $A\subset \mathbb R^d$, it holds
\begin{equation}\label{asympevents}
\mathbb P_c\left(\bigcap_{t\geq 0}\bigcup_{u\geq t}\{S_u\in A\}\right)\in\{0,1\}\quad\mbox{and}\quad \mathbb P_c\left(\bigcap_{n\geq 0}\bigcup_{k\geq n}\{Z_k\in A\}\right)\in\{0,1\}.
\end{equation}
In particular, a MLW (\emph{resp.} MRW) is either recurrent or transient in the sense that either for any $c\in\mathcal S$,
\begin{equation}\label{rec2}
\mathbb P_{c}\left(\lim_{t\to\infty} \|S_{t}\|=\infty\right)=1\quad\left(resp.  \quad\mathbb P_{c}\left(\lim_{n\to\infty} \|Z_{n}\|=\infty\right)=1\right),\tag{T}
\end{equation}
or for any $c\in\mathcal S$ there exists $r>0$ (\emph{a priori} depending on $c$) such that
\begin{equation}\label{rec1}
\mathbb P_{c}\left(\liminf_{t\to\infty}\|S_{t}\|<r\right)=1 \tag{R-1}
\quad\left(\mbox{resp.}\quad\mathbb P_{c}\left(\liminf_{n\to\infty} \|Z_n\|<r\right)=1\right).
\end{equation}
\end{prop}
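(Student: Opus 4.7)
The plan is to reduce everything to Hewitt-Savage's zero-one law applied to an i.i.d.\ excursion decomposition of the skeleton MRW and its continuous interpolation. Fixing $c\in\mathcal S$ and working under $\mathbb P_c$, I would let $R_0=0<R_1<R_2<\cdots$ denote the successive return times of $C$ to $c$; these are a.s.\ finite by irreducibility and recurrence of $C$, and the strong Markov property for the Markov chain $(Z_n,C_n)$ (augmented by the coupling variables $\{(\tau_n,\overrightarrow{\tau_n})\}$) yields that the centred excursions
\[
\eta_k := \bigl\{(Z_n-Z_{R_{k-1}},\,C_n)\,:\,R_{k-1}\leq n\leq R_k\bigr\},\qquad k\geq 1,
\]
together with the associated continuous-time pieces of $S$ on $[B_{R_{k-1}},B_{R_k}]$, form an i.i.d.\ sequence under $\mathbb P_c$. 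The crucial observation is that for any Borel set $A\subset\mathbb R^d$, each of the events in \eqref{asympevents} is exchangeable in $\{\eta_k\}_{k\geq 1}$: permuting finitely many excursions, say those indexed by $k\leq N$, leaves both the cumulative displacement $W_N:=\sum_{j=1}^N\Delta_j$ (where $\Delta_j$ denotes the net displacement of $\eta_j$) and each excursion $\eta_k$ with $k>N$ invariant, so the visits of $Z$ (or $S$) to $A$ during all excursions of index greater than $N$ are unaffected. Hewitt-Savage's zero-one law then delivers \eqref{asympevents}.

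From the zero-one law I would deduce the dichotomy by introducing $A_r:=\{\liminf_{t\to\infty}\|S_t\|<r\}$ for $r>0$: these events are nondecreasing in $r$, with $A_\infty:=\bigcup_{r>0}A_r=\{\liminf_{t\to\infty}\|S_t\|<\infty\}$ being the complement of $\{\|S_t\|\to\infty\}$, and each $A_r$ and $A_\infty$ has probability $0$ or $1$. If $\mathbb P_c(A_\infty)=0$ one obtains \eqref{rec2}; otherwise $\mathbb P_c(A_\infty)=1$, monotone continuity gives $\mathbb P_c(A_r)\nearrow 1$ as $r\to\infty$, and the zero-one law forces $\mathbb P_c(A_r)=1$ for some finite $r$, yielding \eqref{rec1}. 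The same reasoning applies verbatim to the skeleton $Z$.

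Finally, to obtain that the type is uniform in the starting state I would use irreducibility once more: for any $c,c'\in\mathcal S$ the first hitting time $T$ of $c'$ by $C$ is a.s.\ finite under $\mathbb P_c$, and the strong Markov property identifies the distribution of $S_{B_T+\cdot}-S_{B_T}$ (resp.\ $Z_{T+\cdot}-Z_T$) with that of the corresponding process under $\mathbb P_{c'}$. Since $\|S_{B_T}\|$ and $\|Z_T\|$ are a.s.\ finite, both the properties $\|\cdot\|\to\infty$ and $\liminf\|\cdot\|<\infty$ transfer between starting configurations (with $r$ possibly being enlarged by a finite additive constant in the recurrent case). The main obstacle I anticipate is a careful verification of the exchangeability claim in the Hewitt-Savage step: the excursions have random length so one needs to be careful in formulating their state space, but the argument ultimately hinges only on the commutativity of the finite sum defining $W_N$ and on Hewitt-Savage requiring invariance only under finitary permutations.
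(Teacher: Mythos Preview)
Your proposal is correct and follows essentially the same approach as the paper: excursion decomposition at the successive return times of $C$ to the initial state, i.i.d.\ structure of the excursions, Hewitt--Savage applied to the asymptotic events, and irreducibility plus the Markov additive translation invariance to propagate the type to all starting configurations. Your write-up is in fact more explicit than the paper's on two points---the reason the events are exchangeable (commutativity of the cumulative displacement $W_N$) and the passage from the zero-one law to the dichotomy via the monotone family $A_r$---but the underlying ideas are identical.
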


\begin{rem}
Considering the original problematic of PRWs built from VLMCs, an analogous zero-one law holds substituting  $\{S_t\}_{t\geq 0}$ with the discrete-time process $\{S_n\}_{n\geq 0}$.
\end{rem}

\begin{proof}
Let $c \in \mathcal S$ be and introduce the successive visit times $\{\sigma_{n}\}_{n\geq 0}$ of $c$. One define 
\begin{equation}
\mathbb X_n:=\left\{\overrightarrow{\tau_{\sigma_n+k}}(C_{\sigma_n+k-1},C_{\sigma_n+k})\right\}_{1\leq k\leq \sigma_{n+1}-\sigma_{n}},
\end{equation}
for all $n\geq 0$. Since the excursions between two visits of $c$ are {\it i.i.d.\@} under $\mathbb P_c$, so it is for $\{\mathbb X_n\}_{n\geq 0}$. Therefore, the zero-one law (\ref{asympevents}) follows from the Hewitt-Savage zero-one law \cite[Theorem 3.15, p.53]{Kal} noting that the asymptotic events belong to the exchangeable $\sigma$-field of $\{ \mathbb X \}_{n \geq 0}$.

Specifying the zero-one law \eqref{asympevents} to the events considered in \eqref{rec2} and \eqref{rec1} so that they occur with probability zero or one, it only remains to prove these probabilities do not depend on the initial configuration $c$. To this end, suppose that $S$ (\emph{resp.} $Z$) goes to infinity for one configuration $c$. Then, the irreducibility of $C$ and the translation invariance property \eqref{MAPTRANS} of Markov additive processes imply $S$ goes to infinity with a positive probability, and in turn with probability one, for any internal state.
\end{proof}

Assuming in addition $C$ is $\pi$-positive recurrent,  one can improve (\ref{rec1}) for MRWs. To this end, introduce the recurrent set $\mathcal R$ and the set of  possible points $\mathcal P$ defined by 
\begin{equation}
\mathcal R:=\left\{x\in\mathbb R^d : \forall \varepsilon>0,\; \mathbb P_\pi(Z_n\in B(x,\varepsilon)\;{\rm i.o.})=1\right\},
\end{equation}
and
\begin{equation}
\mathcal P:=\{x\in\mathbb R^d : \forall \varepsilon>0,\;\exists\,n\geq 0,\; \mathbb P_\pi(Z_n\in B(x,\varepsilon))>0\},
\end{equation} 
where $B(x,\varepsilon)\subset \mathbb R^d$ stands for the open ball of radius $\varepsilon>0$ centered at $x\in\mathbb R^d$. Note that $\mathcal R$ and $\mathcal P$ are both closed subsets. Now let $\Gamma \subset \mathbb R^d$ be the smallest closed subgroup containing the support of the distribution mixture
\begin{equation}\label{mixture}
\mu_\pi(dx):=\sum_{c\in\mathcal S}\pi(c)\mu_c(dx),
\end{equation}
where $\mu_c(dx)$ is given in (\ref{conditionaljump}).

\begin{prop}[recurrence features for MRWs and series criterion]\label{dicho2}
Assume that the internal Markov chain is irreducible and $\pi$-positive recurrent. Then one has $\mathcal P= \mathcal R=\Gamma$ when the MRW is recurrent. Furthermore,  the alternative (\ref{rec1})  is equivalent for the MRW to each of the following statements.
\begin{enumerate}
\item For some (or equivalently any) $\varepsilon>0$  and some (or any) initial distribution $\nu$,
\begin{equation}\label{rec3}
\mathbb P_{\nu}\left(\liminf_{n\to\infty} \|Z_n\|<\varepsilon\right)=1.\tag{R-2}
\end{equation}
\item For some (or any) $\varepsilon>0$ and some (or any) initial state $c\in\mathcal S$,
\begin{equation}\label{rec4}
\sum_{n=0}^\infty\mathbb P_c(Z_n\in B(0,\varepsilon))=\infty.\tag{R-3}
\end{equation}

\end{enumerate}
\end{prop}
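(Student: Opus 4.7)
The strategy is to reduce Proposition \ref{dicho2} to classical random walk theory by embedding $Z$ at the return times of the internal chain $C$ to a reference state $c_0 \in \mathcal S$. Set $\sigma_0 = 0$ and $\sigma_{k+1} = \inf\{n > \sigma_k : C_n = c_0\}$. Positive recurrence yields $\mathbb E_{c_0}[\sigma_1] < \infty$, and by the strong Markov property the variables $W_k := Z_{\sigma_k} - Z_{\sigma_{k-1}}$ are i.i.d.\ under $\mathbb P_{c_0}$. Hence $\tilde S_n := Z_{\sigma_n} = W_1 + \cdots + W_n$ is an ordinary random walk on $\mathbb R^d$, to which the usual Chung--Fuchs theory applies. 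The first step is then to identify the closed subgroup $\tilde \Gamma$ generated by $\mathrm{supp}(W_1)$ with $\Gamma$: the inclusion $\tilde \Gamma \subseteq \Gamma$ is immediate, while for the reverse one picks $c \in \mathcal S$ and compares two positive-probability cycles from $c_0$ back to $c_0$ passing through $c$ that differ only in the jump leaving $c$; this places $\mathrm{supp}(\mu_c) - \mathrm{supp}(\mu_c) \subseteq \tilde \Gamma$, and varying $c$ (which carries positive weight by irreducibility of $C$) together with the group closure yields $\Gamma \subseteq \tilde \Gamma$.

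Next, the equivalence of recurrence of $Z$ and $\tilde S$ is established. The implication "$\tilde S$ recurrent $\Rightarrow$ $Z$ recurrent" is trivial. The converse follows from Proposition \ref{dicho}: if $Z$ is transient then $\|Z_n\| \to \infty$ a.s., so $\|\tilde S_n\| \to \infty$ and $\tilde S$ is transient. Classical random walk theory gives $\tilde{\mathcal R}(\tilde S) = \tilde{\mathcal P}(\tilde S) = \tilde\Gamma = \Gamma$ in the recurrent case. The trivial inclusions $\tilde{\mathcal R}(\tilde S) \subseteq \mathcal R \subseteq \mathcal P$ combined with $\mathcal P \subseteq \Gamma$ (since any $Z_n$ is a finite sum of one-step increments, each drawn from some $\mu_c$ whose support lies in $\mathrm{supp}(\mu_\pi) \subseteq \Gamma$) then force $\mathcal R = \mathcal P = \Gamma$ under recurrence. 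In particular $0 \in \mathcal R$ starting from $c_0$, and by translation invariance \eqref{MAPTRANS} and irreducibility this promotes to any initial distribution and any $\varepsilon > 0$, yielding (R-1) $\Leftrightarrow$ (R-2).

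The series criterion (R-3) is obtained by comparing the Green kernels of $Z$ and $\tilde S$. Decomposing
\begin{equation*}
\sum_{n \geq 0} \mathbb P_{c_0}(Z_n \in B(0,\varepsilon)) = \mathbb E_{c_0}\!\left[\sum_{k \geq 0}\#\bigl\{n\in [\sigma_k,\sigma_{k+1}) : Z_n \in B(0,\varepsilon)\bigr\}\right],
\end{equation*}
one bounds the inner count above by $(\sigma_{k+1}-\sigma_k)\mathbf 1_{\tilde S_k \in B(0,\varepsilon + D_k)}$ and below by $\mathbf 1_{\tilde S_k \in B(0,\varepsilon/2),\,D_k \le \varepsilon/2}$, where $D_k$ denotes the maximal deviation of $Z$ from $\tilde S_k$ during the $k$-th $c_0$-cycle. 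Since the cycle length and the cycle-oscillation $D_k$ are i.i.d.\ and $\sigma_1$ has finite mean, these bounds, combined with the classical Chung--Fuchs series criterion for $\tilde S$, show that $\sum_n\mathbb P_{c_0}(Z_n\in B(0,\varepsilon))=\infty$ is equivalent to recurrence of $\tilde S$, that is, to (R-1). The passage to arbitrary $c \in \mathcal S$ and to any $\varepsilon > 0$ is again handled by irreducibility and the 0-1 law. The main obstacle will be the comparison of Green kernels in the last step: without any moment condition on the jumps $\overrightarrow\tau$, the oscillation $D_k$ within a cycle can be arbitrarily heavy-tailed, so the comparison must be done pointwise via the indicator bounds above rather than through expected contributions, and it is precisely here that the strong Markov structure at $c_0$ and positive recurrence must be combined delicately.
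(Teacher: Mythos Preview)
Your reduction to the i.i.d.\ embedded walk $\tilde S_n=Z_{\sigma_n}$ at successive returns of $C$ to a reference state $c_0$ is a natural, self-contained strategy and is genuinely different from the paper's proof, which instead quotes Berbee's recurrence theory for stationary random walks (applied under $\mathbb P_\pi$) and adapts Alsmeyer's one-dimensional MRW results to $\mathbb R^d$. The embedding buys concreteness; the paper's route buys brevity by delegating the structural work to the literature.

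There is, however, a real gap in your identification $\tilde\Gamma=\Gamma$. Comparing two $c_0$-cycles that differ only in the jump leaving $c$ shows merely that $\operatorname{supp}(\mu_c)-\operatorname{supp}(\mu_c)\subset\tilde\Gamma$; it does not place $\operatorname{supp}(\mu_c)$ itself in $\tilde\Gamma$, and the inclusion $\Gamma\subset\tilde\Gamma$ can fail. Take $\mathcal S=\{0,1\}$ with $P(0,1)=P(1,0)=1$ and $\mu_0=\mu_1=\tfrac12(\delta_1+\delta_{-1})$: then $\Gamma=\mathbb Z$, while every $c_0$-cycle has length $2$ and displacement in $\{-2,0,2\}$, so $\tilde\Gamma=2\mathbb Z$. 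The MRW is recurrent with $\mathcal R=\mathbb Z$, yet your sandwich $\tilde\Gamma\subset\mathcal R\subset\mathcal P\subset\Gamma$ gives only $2\mathbb Z\subset\mathcal R\subset\mathbb Z$. To recover $\mathcal R=\Gamma$ within the embedding approach you must exploit the intermediate positions $Z_n$ for $\sigma_k<n<\sigma_{k+1}$ and argue directly that $\mathcal R$ is a closed subgroup containing every $\operatorname{supp}(\mu_c)$---which is essentially the content of the stationary-walk results the paper invokes.

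For (R-3) your outline is on the right track but, as you yourself flag, unfinished. The missing ingredient is the uniform Green bound for a transient i.i.d.\ walk: $\sup_x\sum_k\mathbb P(\tilde S_k\in B(x,\varepsilon))<\infty$. Granting this, condition on the within-cycle displacements $Y_{k,j}:=Z_{\sigma_k+j}-\tilde S_k$ for $0\le j<\sigma_{k+1}-\sigma_k$ (which are independent of $\tilde S_k$) and sum over $j$ and $k$ to obtain
\[
\sum_{n\ge 0}\mathbb P_{c_0}\bigl(Z_n\in B(0,\varepsilon)\bigr)\;\le\;\mathbb E_{c_0}[\sigma_1]\cdot\sup_x\sum_{k\ge 0}\mathbb P\bigl(\tilde S_k\in B(x,\varepsilon)\bigr)<\infty,
\]
with no moment hypothesis on the jumps needed.
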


\begin{rem}
Regarding the recurrence set associated with $S$ the question seems to be more intricate since it depends strongly on the geometry of each conditional jumps $\mu_{c,s}(dx)$. 
For instance, one can be easily convinced that it is possible for two recurrent PRWs to have both recurrent skeletons in $\mathbb Z^2$ but distinct recurrent set given respectively by $\mathbb R^2$ and
$\{(x,y)\in\mathbb R^2 : x\in\mathbb Z\mbox{ or } y\in\mathbb Z\}$.
\end{rem}

\begin{proof}
First, we deduce from the partition exhibited in \cite{berbeeRecTrans} for stationary random walks and from the zero-one law in Proposition \ref{dicho} that (\ref{rec1}) is equivalent to (\ref{rec3}) when $\nu=\pi$, and thus for any (or some) arbitrary $\nu$ since $\pi$ is fully supported. Besides, one can easily see that the relevant  Propositions in \cite[pp. 127-130]{Alsmeyer}  can be adapted to a multidimensional  framework. The first Proposition for instance which is originally  taken from \cite[p. 56]{berbeethesis} can be more generally obtained for multidimensional MRW using \cite{berbeeRecTrans} together with the dichotomy Proposition \ref{dicho}. It follows that the recurrent alternative is equivalent to (\ref{rec4}) and $\mathcal P=\mathcal R=\Gamma$.
\end{proof}

\subsection{A general sufficient Fourier criterion}

 \label{secFourier}

Fourier analysis is substantially different in the lattice and non lattice case. That is why, from now on, we make the choice to restrict ourself to MRWs taking values in $\mathbb Z^d$ which is irrelevant for the study of PRWs built from VLMCs. Besides, Fourier analysis in the lattice context usually requires a notion of aperiodicity defined below. 

\begin{defi}[aperiodic MRW]
A MRW is said to be  periodic if for some $c\in\mathcal S$, $x\in\mathbb Z^d$ and proper subgroup $\Gamma\subsetneq \mathbb Z^d$ it holds  $\mu_c(x+\Gamma)=1$. On the contrary, it is said  aperiodic.
\end{defi}

In the sequel, we make the following probabilistic assumptions.

\begin{ass}[probabilistic assumptions]~\label{assprob}
\begin{enumerate}
\item[\hypertarget{P1}{\rm (P1)}] The internal Markov chain $C$ is irreducible, aperiodic (classical sense) and $\pi$-positive recurrent.
\item[\hypertarget{P3}{\rm (P2)}] The Markov random walk $Z$ is  aperiodic in $\mathbb Z^d$. 

\end{enumerate}
\end{ass}

We  introduce for every $t\in\mathbb T^d$ -- the $d$-dimensional torus $\mathbb R^d/2\pi\mathbb Z^d$ -- the operator on $\mathbb{L}^1(\pi)$ defined for every  $f\in\mathbb{L}^1(\pi)$ and  $c\in\mathcal S$ by
\begin{equation}
P_tf(c):=\mathbb E_c[e^{itZ_1}f(C_1)].
\end{equation}
Regarding such Fourier perturbations  we refer to \cite{Gui:83,Bab:88,Uch:07,guilepagestable,HervePen} for instance. We recall that the peripheral spectrum -- the set of spectral values with maximal modulus -- is well defined for bounded  operators. Moreover, we say that a Markov operator has a spectral gap when its spectrum outside a centered ball of radius $1-\rho$ is finite for some $0<\rho<1$.

\begin{ass}[operator assumptions]\label{assop} There exists a Banach space $(\mathcal B,\|\cdot \|_{\mathcal B})$ such that:
\begin{enumerate}
\item[\hypertarget{O1}{\rm (O1)}] $\mathds 1\in \mathcal B$ and the canonical injection $\mathcal B\longhookrightarrow \mathbb{L}^1(\pi)$ is continuous; 
\item[\hypertarget{O2}{\rm (O2)}] the operators $P_t$ acts continuously on $\mathcal B$ for every $t\in\mathbb T^d$;
\begin{enumerate}
\item[\hypertarget{O2a}{\rm (a)}] the restricted Markov kernel $P : \mathcal B\longrightarrow \mathcal B$ admits a spectral gap; 
\item[\hypertarget{O2b}{\rm (b)}] the map $t\longmapsto P_t$ is continuous for the subordinated norm operator induced by $\|\cdot\|_{\mathcal B}$;
\item[\hypertarget{O2c}{\rm (c)}] the peripheral spectrum of $P_t: \mathcal B\longrightarrow \mathcal B$ only consists of eigenvalues.
\end{enumerate}
\end{enumerate}
\end{ass}

Remark those operator assumptions are satisfied for the Banach space $\mathbb{L}^2(\pi)$ when the $P_t$ are quasi-compact. We allude to \cite{KonMeyn,KonMeyn2,MeynTweedie,GarRosen,HerLedoux,Hennion,guibourg:hal}  for more general consideration about these properties but also -- when there exists Lyapunov functions -- for the interesting situation of weighted-supremum spaces  corresponding to geometric ergodicity. Before stating the last assumptions, let us draw some important consequences.

\begin{prop}[the eigenvalue of maximal modulus]\label{eigenvalue} Under Assumptions \ref{assprob} and \ref{assop}, for any sufficiently small neighbourhood $\mathcal V\subset\mathbb T^d$ of the origin and any $y \in \mathcal V$ the following properties hold:
\begin{enumerate}
\item the spectrum of $P_t$ admits a unique element of maximal modulus $\lambda(t)$; 
\item $\lambda(t)$ is an eigenvalue of algebraic multiplicity one;
\item the map $t\longmapsto\lambda(t)$ is continuous;
\item $|\lambda(t)|\leq 1$ and $|\lambda(t)|=1$  if and only if  $t=0$.
\end{enumerate}
\end{prop}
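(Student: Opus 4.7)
The plan is to run a Nagaev--Guivarc'h style perturbation argument around the unperturbed Markov operator $P_0 = P$. First I would pin down the spectrum of $P$ on $\mathcal B$: by \hyperlink{O2a}{(O2a)}, $P$ has a spectral gap, and by \hyperlink{O2c}{(O2c)} its peripheral spectrum consists of eigenvalues only. By \hyperlink{O1}{(O1)}, $\mathds 1 \in \mathcal B$ is a right eigenfunction of $P$ with eigenvalue $1$, while the irreducibility, aperiodicity and $\pi$-positive recurrence of $C$ from \hyperlink{P1}{(P1)} ensure that $1$ is algebraically simple (uniqueness of the invariant probability) and that no other peripheral eigenvalue exists: any $f \in \mathcal B$ with $Pf = e^{i\theta} f$ satisfies $|f| \leq P|f|$, and integrating against $\pi$ promotes this to equality $\pi$-almost everywhere, forcing $|f|$ to be constant by irreducibility; then $e^{i\theta}$ would have to be a period of $P$, which is impossible by aperiodicity.

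Next I would appeal to classical analytic perturbation theory (Dunford--Kato). By \hyperlink{O2b}{(O2b)}, $t \mapsto P_t$ is continuous in operator norm, so choosing a small contour $\gamma$ in $\mathbb C$ enclosing $1$ and isolating it from the rest of $\mathrm{sp}(P)$, the Riesz projector
\begin{equation}
\Pi_t := \frac{1}{2\pi i}\oint_\gamma (z - P_t)^{-1}\,dz
\end{equation}
is well defined for $t$ in a neighbourhood $\mathcal V$ of the origin, continuous in $t$, and of constant rank one. Consequently $P_t$ admits a unique eigenvalue $\lambda(t)$ inside $\gamma$, which is algebraically simple and depends continuously on $t$, yielding assertions (1)--(3). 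For the bound $|\lambda(t)| \leq 1$, I would use the pointwise domination $|P_t f| \leq P|f|$ together with the $\pi$-invariance $\pi P = \pi$, which gives $\|P_t^n f\|_{\mathbb{L}^1(\pi)} \leq \|f\|_{\mathbb{L}^1(\pi)}$; since $\mathcal B \hookrightarrow \mathbb{L}^1(\pi)$ is continuous by \hyperlink{O1}{(O1)}, the spectral radius of $P_t$ on $\mathcal B$ is at most one, hence so is $|\lambda(t)|$.

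The main obstacle is the implication $|\lambda(t)| = 1 \Rightarrow t = 0$. Writing $\lambda(t) = e^{i\theta}$ and choosing an eigenfunction $f \in \mathcal B$, the same triangle-inequality plus irreducibility argument as above forces $|f|$ to be a nonzero constant, and I would normalise $|f| \equiv 1$. Equality in the triangle inequality applied to $\mathbb E_c[e^{itZ_1} f(C_1)] = e^{i\theta} f(c)$ then yields
\begin{equation}
e^{itZ_1}\,f(C_1) = e^{i\theta}\,f(c) \quad \mathbb P_c\text{-almost surely, for every } c \in \mathcal S.
\end{equation}
Writing $f(c) = e^{i\varphi(c)}$, this cocycle-type identity constrains, for each pair $(c,s)$ of consecutive internal states, the conditional law of $Z_1$ to lie in a single coset of a proper subgroup of $\mathbb Z^d$ determined by $(\theta, \varphi, t)$. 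Gluing these cosets through the irreducibility of $C$, the averaged jump distribution $\mu_c$ ends up supported in a proper coset of a subgroup of $\mathbb Z^d$, which contradicts the aperiodicity assumption \hyperlink{P3}{(P2)} on $Z$ unless $t = 0$ in $\mathbb T^d$. This last step is the most delicate, because it is exactly where the lattice aperiodicity intervenes, and some care is needed to descend from the cocycle identity for $f$ on $\mathcal S$ to a genuine support statement on the increments of $Z$.
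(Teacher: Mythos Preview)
Your proposal is correct and follows essentially the same Nagaev--Guivarc'h perturbation scheme as the paper: simplicity of the eigenvalue $1$ for $P$, Kato--Dunford perturbation via the Riesz projector for points (1)--(3), and the triangle-inequality argument $|h|\le P|h|$ forcing $|h|$ constant and then $|\widehat\mu_{c,s}(t)|=1$ for point (4). Two small remarks. First, your sentence ``the spectral radius of $P_t$ on $\mathcal B$ is at most one'' does not follow from the $\mathbb L^1(\pi)$-contraction alone; what you actually need (and have) is that $\lambda(t)$ is an \emph{eigenvalue} with eigenvector $f\in\mathcal B\subset\mathbb L^1(\pi)$, so $|\lambda(t)|\le\|P_t\|_{\mathbb L^1\to\mathbb L^1}\le 1$. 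Second, the paper runs the argument for (4) globally on $\mathbb T^d$ via upper semicontinuity of the spectral radius and assumption \hyperlink{O2c}{(O2c)}, obtaining $\sup_{t\notin\mathcal V}\rho(P_t)<1$; this stronger conclusion is what is actually invoked later in the proof of Theorem~\ref{Zcriterion}, so you will want to upgrade your neighbourhood-only version accordingly.
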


In particular, we can write the Markov operator $P$ as $Q+(P-Q)$ where $Q=\pi\otimes \mathds 1$ is the eigenprojector  on ${\rm span}(\mathds 1)$ defined by $Qf=(\pi f) \mathds 1$. Note that $P-Q$ leaves invariant ${\rm ker}(Q)={\Im}(1-Q)$ and thus commutes with $Q$. Introduce the linear bounded operator  $T : \mathcal B\longrightarrow\mathcal B$ given by 
\begin{equation}\label{reducedresolvent}
Tf:=\sum_{n=0}^\infty P^n(f-(\pi f)\mathds 1).
\end{equation}
This operator naturally appears in the series expansion of $\lambda(t)$ near the origin.

\begin{prop}[series expansion]\label{expansprop}
Under Assumptions \ref{assprob} and \ref{assop}, one has for any sufficiently small neighbourhood $\mathcal V\subset\mathbb T^d$ of the origin and any $t \in \mathcal V$ the expansion 
\begin{equation}\label{expans}
\lambda(t)-1=\sum_{n=0}^\infty (-1)^n\pi((P_t-P)T)^n(P_t-P)\mathds 1=\pi(1+(P_t-P)T)^{-1}(P_t-P)\mathds 1.
\end{equation}
\end{prop}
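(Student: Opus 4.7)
The plan is to combine the spectral characterization of $\lambda(t)$ from Proposition \ref{eigenvalue} with algebraic manipulations of the eigenvalue equation via the reduced resolvent $T$.

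First, for $t$ in a sufficiently small neighbourhood $\mathcal V \subset \mathbb T^d$ of the origin, I fix a right eigenvector $v_t \in \mathcal B$ associated with $\lambda(t)$, normalized so that $\pi v_t = 1$. Such a normalization is admissible because, by the operator-norm continuity \hyperlink{O2b}{(O2b)} together with the simplicity of $\lambda(t)$ from Proposition \ref{eigenvalue}, the rank-one spectral projector $\Pi(t)$ of $P_t$ at $\lambda(t)$ converges to $Q = \mathds{1} \otimes \pi$ as $t \to 0$; in particular $\pi \Pi(t) \mathds 1 \neq 0$ for $t$ small. Applying the $P$-invariant functional $\pi$ to the rewritten eigenvalue equation $(P_t - P) v_t = (\lambda(t) - 1) v_t + (I - P) v_t$ and using $\pi(I - P) = 0$ together with $\pi v_t = 1$ yields the scalar identity
\begin{equation*}
\lambda(t) - 1 = \pi(P_t - P) v_t. \qquad (\ast)
\end{equation*}

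Next, I express $v_t$ as a Neumann series in $(P_t - P) T$. Rewriting the eigenvalue equation as $(I - P) v_t = (P_t - P) v_t - (\lambda(t) - 1) v_t$ and noticing---by $(\ast)$---that the right-hand side has vanishing $\pi$-integral and thus lies in $\ker Q$, I apply $T$. Exploiting the identity $T(I - P) = I - Q$ on $\mathcal B$ (itself a consequence of the telescoping $(I - P) \sum_{n \geq 0} P^n(I - Q) = (I - \lim_n P^{n+1})(I - Q) = I - Q$, valid under the spectral gap \hyperlink{O2a}{(O2a)}), together with $Q v_t = \mathds 1$ and $T \mathds 1 = 0$, gives the fixed-point relation
\begin{equation*}
\bigl(I + (\lambda(t) - 1) T\bigr)(v_t - \mathds 1) = T(P_t - P) v_t.
\end{equation*}

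Inverting the left-hand side (which is justified for $t$ small by $|\lambda(t) - 1| \, \|T\|_{\mathcal L(\mathcal B)} < 1$) and using the elementary operator identity $A(I - BA)^{-1} = (I - AB)^{-1} A$ with $A = P_t - P$ and $B = T$, one eliminates $(v_t - \mathds 1)$ and substitutes the resulting expression for $v_t$ back into $(\ast)$ to arrive at
\begin{equation*}
\lambda(t) - 1 = \pi \bigl(I + (P_t - P) T\bigr)^{-1}(P_t - P) \mathds 1 = \sum_{n = 0}^{\infty} (-1)^n \pi \bigl((P_t - P) T\bigr)^n (P_t - P) \mathds 1,
\end{equation*}
the Neumann series converging in $\mathcal L(\mathcal B)$ for $t$ small enough thanks to \hyperlink{O2b}{(O2b)}. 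The main technical obstacle lies precisely in this absorption step: I will need to verify that the self-referential $\lambda(t) - 1$ hidden inside $(I + (\lambda(t) - 1) T)^{-1}$ telescopes exactly into the Neumann series for $(I + (P_t - P) T)^{-1}$, a cancellation that rests on the interplay $T \mathds 1 = 0$, $\pi T = 0$, and the normalization $\pi v_t = 1$, and which ultimately leaves an expression depending only on $(P_t - P)$ and $T$.
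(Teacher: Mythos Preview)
Your approach is genuinely different from the paper's. The paper never writes down an eigenvector; it works entirely with the holomorphic functional calculus. Starting from the trace representation $\lambda(H)-1=\mathrm{Tr}\bigl((P+H-1)Q_H\bigr)$ and the Dunford formula for the spectral projection $Q_H$, it expands the perturbed resolvent $R_H(\xi)=R(\xi)\sum_{p\ge 0}(-HR(\xi))^p$, substitutes the Laurent series of $R(\xi)$ at $\xi=1$, and then reads off the residue term by term. The identities $TQ=QT=0$ are applied \emph{after} taking the trace (via the cyclic rule for rank-one operators), which is what kills all but one term at each order $p$ and produces the Neumann series directly. The point is that the Dunford route never generates a self-referential occurrence of $\lambda(t)-1$: the formula emerges as an explicit function of $H$ and $T$ from the outset.

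Your route, by contrast, produces the correct scalar identity $(\ast)$ and the correct fixed-point relation $(I+(\lambda(t)-1)T)(v_t-\mathds 1)=T(P_t-P)v_t$, but then stalls exactly where you say it does. The difficulty is not merely technical: after inverting and pushing through, you obtain the \emph{implicit} equation
\[
\lambda(t)-1=\pi H\bigl(I-TH+(\lambda(t)-1)T\bigr)^{-1}\mathds 1,\qquad H:=P_t-P,
\]
and the identities $T\mathds 1=0$, $\pi T=0$, $\pi v_t=1$ are \emph{not} sufficient to absorb the inner $(\lambda(t)-1)T$ into a plain Neumann series in $HT$. Concretely, expanding this inverse to second order in $H$ already shows that the $\kappa T$ contribution survives (it multiplies $T v_t$, which is generically nonzero), so the ``telescoping'' you anticipate does not occur by pure algebra. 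The paper's contour-integral argument is precisely what replaces this missing step: it furnishes, order by order, the explicit residue whose trace collapses to a single surviving word $Q(HT)^{p-1}HQ$, and the self-reference never appears.

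In short: your scaffolding is sound and the alternative strategy is natural, but the proof is incomplete at the decisive point, and the mechanism you sketch (push-through identity plus the three cancellations) does not close the gap. If you want to finish along your lines, you would need an additional argument---for instance, showing that your implicit relation and the claimed explicit formula satisfy the \emph{same} analytic fixed-point equation with a unique small solution---rather than a purely algebraic cancellation.
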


Furthermore,  to avoid some tangential convergence making the recurrence criteria  more intricate to expose, we need the following technical hypothesis.

\begin{ass}[sector condition]\label{sector} For any sufficiently small neighbourhood $\mathcal V\subset\mathbb T^d$ of the origin, there exists $K>0$ such that for all  $t\in \mathcal V$,
\begin{equation}\label{sectoreq}
|{\Im}(\lambda(t))|\leq K\,{\Re}(1-\lambda(t)).
\end{equation}
\end{ass}

One can reformulate this condition by saying the family $\{P_t\}_{t\in\mathcal V}$ is uniformly sectorial. Most of the operators encountered are sectorial and one can consult \cite[Chapter 2]{Lunardi1} and  \cite{CGT} for a rigorous definition and elementary properties.

When  the underlying Banach space is  the Hilbert space $\mathbb{L}^2(\pi)$, this assumption can be stated in a more handy way in terms of the associated sectorial forms since  most of the information about the spectrum can be obtained from their numerical range by using the minimax principle. We refer to \cite{Kato} and particularly its Chapters Five  and Six for more details. Introduce the sesquilinear form 
\begin{equation}
\mathcal E_t[f,g]=\sum_{c\in\mathcal S}(f-P_tf)(c)\overline{g(c)}\pi(c).
\end{equation}
Note that for $t=0$, it is nothing but the usual Dirichlet form associated with the driving chain. Then one can consider  the real and imaginary part of the latter form  respectively given by
\begin{equation}
\mathfrak R_t(f,g):=\frac{\mathcal E_t[f,g]+\overline{\mathcal E_t[g,f]}}{2}\quad\mbox{and}\quad \mathfrak I_t(f,g):=\frac{\mathcal E_t[f,g]-\overline{\mathcal E_t[g,f]}}{2i}.
\end{equation}
It turns out  that $\mathfrak R_t$ is a symmetric and positive (definite when $t\neq 0$) sesquilinear form but also that condition  (\ref{sectoreq}) is equivalent to the usual sector condition $|\mathfrak I_t|\leq C\,\mathfrak R_t$.  Typically, this inequality trivially holds when $\pi$ is a reversible probability measure and the conditional jumps satisfy the symmetry relation $\mu_{c,s}(dx)=\mu_{s,c}(-dx)$ for every $c,s\in\mathcal S$. In that case, the imaginary part vanishes and the spectrum is real. Roughly speaking, the sector condition does not allow a too strong drift term.

The following theorem deal with a Fourier-like criterion for MRWs which extends to a series criterion (\ref{rec4}) for more general initial distribution. Note that a distribution $\nu$ induces a continuous linear form  on $\mathbb  L^1(\pi)$ if and only if there exists $c>0$ such that $\nu\leq c\pi$. Such distributions are said to be  dominated by $\pi$.

\begin{theo}[Fourier and series criterion] \label{Zcriterion}
Under Assumptions \ref{assprob}, \ref{assop} and \ref{sector} the MRW is recurrent or transient accordingly as
\begin{equation}\label{fourier}
\lim_{r\uparrow 1}\int_{\mathcal V} {\Re}\left(\frac{1}{1-r\lambda(t)}\right)dt=\infty\quad\mbox{or}\quad 
\lim_{r\uparrow 1}\int_{\mathcal V} {\Re}\left(\frac{1}{1-r\lambda(t)}\right)<\infty,
\end{equation}
for some (or any) neighbourhood $\mathcal V$ of the origin for which $\lambda(t)$ is well-defined. Besides, the integral above is infinite or finite accordingly as 
\begin{equation}\label{series}
\sum_{n=0}^\infty\mathbb P_\nu(Z_n=0)=\infty\quad\mbox{or}\quad \sum_{n=0}^\infty\mathbb P_\nu(Z_n=0)< \infty,
\end{equation}
for some (or any) initial distribution $\nu$ dominated by $\pi$. 
\end{theo}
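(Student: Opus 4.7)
The strategy is the classical Nagaev--Guivarc'h method: turn the occupation series into a Fourier integral via the operator $P_t$, isolate the contribution of the dominant eigenvalue $\lambda(t)$ near the origin, and finally pass from the Abel-summed expression to the original series.

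First I would connect the series in \eqref{series} with operator calculus. Since $Z$ takes values in $\mathbb Z^d$, Fourier inversion on $\mathbb T^d$ gives
\begin{equation*}
\mathbb P_\nu(Z_n=0)=\frac{1}{(2\pi)^d}\int_{\mathbb T^d}\nu(P_t^n \mathds 1)\,dt,
\end{equation*}
using that $P_t^n \mathds 1(c)=\mathbb E_c[e^{itZ_n}]$ by the Markov additive property \eqref{MAPTRANS}. Abel-summing and exchanging sum and integral (legitimate for $r<1$ by \hyperlink{O1}{(O1)} and \hyperlink{O2a}{(O2a)}) yields
\begin{equation*}
\sum_{n=0}^\infty r^n\mathbb P_\nu(Z_n=0)=\frac{1}{(2\pi)^d}\int_{\mathbb T^d}\nu\bigl((I-rP_t)^{-1}\mathds 1\bigr)\,dt.
\end{equation*}
Since the left-hand side is real, one may replace the integrand by its real part.

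Second, I would localize near the origin. For $t\notin\mathcal V$, aperiodicity \hyperlink{P3}{(P2)}--\hyperlink{P1}{(P1)} together with Proposition \ref{eigenvalue} guarantees that the spectral radius of $P_t$ on $\mathcal B$ is strictly less than one, hence uniformly on the compact $\mathbb T^d\setminus\mathcal V$ there exists $\rho\in(0,1)$ with $\|P_t^n\|_{\mathcal B}\leq C(1-\rho)^n$. This contribution stays bounded as $r\uparrow 1$ and is thus irrelevant for the dichotomy. Inside $\mathcal V$, Proposition \ref{eigenvalue} provides a spectral decomposition $P_t=\lambda(t)\Pi(t)+N(t)$ with $\Pi(t)$ a rank-one eigenprojector and $N(t)$ of spectral radius uniformly smaller than $1-\rho$. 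Using continuity of $\Pi(t)$ in $t$ and the fact that $\Pi(0)=Q=\pi\otimes\mathds 1$, one writes
\begin{equation*}
\nu\bigl((I-rP_t)^{-1}\mathds 1\bigr)=\frac{\nu(\Pi(t)\mathds 1)}{1-r\lambda(t)}+\nu\bigl((I-rN(t))^{-1}(I-\Pi(t))\mathds 1\bigr),
\end{equation*}
where the second term is uniformly bounded in $(r,t)\in[0,1)\times\mathcal V$ by the spectral gap, and $\nu(\Pi(t)\mathds 1)$ is continuous with value $\nu(\mathds 1)=1$ at $t=0$ (here we use that $\nu$ is dominated by $\pi$ so it defines a continuous linear form on $\mathcal B\hookrightarrow\mathbb L^1(\pi)$).

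Third, the sector Assumption \ref{sector} ensures $|1-r\lambda(t)|\leq(1+K)\Re(1-r\lambda(t))$ for $r$ close to $1$ and $t\in\mathcal V$, so that $\Re\bigl(1/(1-r\lambda(t))\bigr)=\Re(1-r\lambda(t))/|1-r\lambda(t)|^2$ is non-negative and controls $|1/(1-r\lambda(t))|$ up to a multiplicative constant. Combining with the decomposition above and the fact that $\nu(\Pi(t)\mathds 1)\to 1$ at $t=0$, one obtains, for some constants $0<c<C<\infty$ and for $r$ close to $1$,
\begin{equation*}
c\int_{\mathcal V}\Re\!\left(\frac{1}{1-r\lambda(t)}\right)dt-C\;\leq\;\sum_{n=0}^\infty r^n\mathbb P_\nu(Z_n=0)\;\leq\;C\int_{\mathcal V}\Re\!\left(\frac{1}{1-r\lambda(t)}\right)dt+C.
\end{equation*}

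Finally, I would conclude using Abel's theorem: since all terms $\mathbb P_\nu(Z_n=0)\geq 0$ are non-negative, $\sum_n r^n\mathbb P_\nu(Z_n=0)$ is monotonic in $r$ and diverges as $r\uparrow 1$ if and only if $\sum_n\mathbb P_\nu(Z_n=0)=\infty$. Hence \eqref{fourier} and \eqref{series} are equivalent. The equivalence with recurrence follows from the series characterization \eqref{rec4} of Proposition \ref{dicho2} applied to $\varepsilon<1$ (so $B(0,\varepsilon)\cap\mathbb Z^d=\{0\}$), combined with the fact that $\pi$-dominated initial distributions all yield the same divergence behaviour by the dichotomy. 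The main technical obstacle is maintaining uniform control of the resolvent $(I-rN(t))^{-1}(I-\Pi(t))$ and of the numerator $\nu(\Pi(t)\mathds 1)$ as $(r,t)\to(1,0)$ jointly; this is where the continuity of $t\mapsto P_t$ in operator norm \hyperlink{O2b}{(O2b)} together with Kato's analytic perturbation framework (Proposition \ref{expansprop}) plays the decisive role.
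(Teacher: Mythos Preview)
Your proof is correct and follows essentially the same route as the paper: Fourier inversion to express the Abel-summed occupation series as $\int_{\mathbb T^d}\Re\bigl(\nu(I-rP_t)^{-1}\mathds 1\bigr)\,dt$, localization outside $\mathcal V$ via the uniform spectral radius bound, the rank-one spectral decomposition $P_t=\lambda(t)\Pi(t)+N(t)$ inside $\mathcal V$, and the sector condition to show that $\nu(\Pi(t)\mathds 1)/(1-r\lambda(t))$ governs the divergence. The only cosmetic difference is that the paper writes the remainder as $\nu(1-rE_t)^{-1}\mathds 1$ rather than your $\nu(I-rN(t))^{-1}(I-\Pi(t))\mathds 1$, and the continuity of $t\mapsto\Pi(t)$ (hence of $\nu(\Pi(t)\mathds 1)$) is drawn from Proposition~\ref{eigenvalue} rather than Proposition~\ref{expansprop}.
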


Observe that the first term in the series expansion (\ref{expans}) is nothing but $\widehat \mu_\pi(t)-1$ where $\mu_\pi$ is defined in (\ref{mixture}). Therefore, the integral criterion (\ref{fourier}) can be interpreted as a perturbation of the classical one obtained for a random walk with $\mu_\pi$ as jump distribution. Also, one can note using the sector condition that this criterion can be rewritten in terms of
\begin{equation}\label{fourier2}
\int_{\mathcal V} \frac{1}{{\Re}(1-\lambda(t))}dt=\infty\quad\mbox{or}\quad 
\int_{\mathcal V} \frac{1}{{\Re}(1-\lambda(t))}dt<\infty.
\end{equation}

Besides, it could be interesting to compare (\ref{fourier}) with the conjecture in \cite[p. 126]{Alsmeyer}. Furthermore, the series criterion (\ref{series}) is obvious for any initial distribution $\nu$ when the internal state space is finite or, to go further, when the internal operator satisfies some Doeblin's condition. It is also possible to suppose the MRW satisfying some scaling limit as in \cite{HervPen} to get the same series criterion.

\subsection{Necessary and sufficient criteria for the quadruple-infinite comb model}

\label{criteriacomb}



 
We first need to extend an oscillation criterion  used in \cite{Rainer2007} for unimodal symmetric distributions.

\begin{theo}[series and Fourier criterion]\label{critère}
  Let $\{H_n\}_{n\geq 0}$ and $\{V_n\}_{n\geq 0}$ be two independent RW on $\mathbb Z$ starting from the origin, the second one being symmetric. Then
\begin{equation}\label{borelcantelli0}
\mathbb P(H_{n+1}=0, V_n V_{n+1}\leq 0,\;\mbox{i.o.})=1\quad \Longleftrightarrow\quad \sum_{n=0}^\infty \mathbb P(H_{n+1}=0)\mathbb P(0\leq V_n\leq V_{n+1}-V_n)=\infty.
\end{equation} 
Furthermore, if the two-dimensional random walk $\{(H_n,V_n)\}_{n\geq 0}$ is transient, this criterion is equivalent to the Fourier integral criterion
\begin{equation}\label{Fourier}
\lim_{r\uparrow 1}\int_{\mathcal V} \int_{\mathcal V}\Re\left(\frac{\Phi_{V}(r,s)}{1-\varphi_{H}(t)\varphi_{V}(s)}\right)ds\,dt=\infty,
\end{equation}
where $\mathcal V\subset\mathbb T^2$ is any sufficiently small neighbourhood of the origin, $\varphi_{H}$ and $\varphi_{V}$ are the characteristic functions of the jumps associated with $H$ and $V$ respectively and $\Phi_{V}$ is the trigonometric series   
\begin{equation}\label{fourierseries}
\Phi_{V}(r,s)=\sum_{n=0}^\infty r^n\mathcal T_{V}(n)\cos(ns).
\end{equation}
Here we denote by $\mathcal T_{V}(n)$ the two-sided tail distribution of the symmetric jumps of $V$. 
\end{theo}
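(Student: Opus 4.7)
The plan is to establish the series equivalence first and then derive the Fourier reformulation through an Abel/Tauberian argument. First I would rewrite the $V$-part using symmetry: letting $Y_{n+1}=V_{n+1}-V_n$ denote the $(n+1)$-th (symmetric) jump of $V$, the symmetry of $Y_{n+1}$ and its independence from $V_n$ give
\begin{equation*}
\mathbb P(V_nV_{n+1}\le 0)=2\,\mathbb P(0\le V_n\le Y_{n+1}).
\end{equation*}
Combined with the independence of $H$ and $V$, the summand in (\ref{borelcantelli0}) is, up to the harmless factor $2$, exactly $\mathbb P(A_n)$ with $A_n=\{H_{n+1}=0,\,V_nV_{n+1}\le 0\}$. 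The convergent direction is then immediate: if the series is finite, the first Borel-Cantelli lemma gives $\mathbb P(A_n\text{ i.o.})=0$.

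For the converse I would apply the second-moment (Kochen-Stone) Borel-Cantelli lemma to $\{A_n\}$, which requires a bound of the form $\mathbb P(A_m\cap A_n)\le C\,\mathbb P(A_m)\mathbb P(A_n)$ for $m<n$. The $H$-factor splits as $\mathbb P(H_{m+1}=0)\mathbb P(H_{n-m}=0)$ by independence of the $H$-increments. The real difficulty lies in the $V$-part: conditioning on $\mathcal F_{m+1}^V$ reduces the problem to estimating, uniformly in the translation $v=V_{m+1}\in\mathbb Z$, the shifted straddling probability
\begin{equation*}
\mathbb P\bigl((V_{n-m-1}+v)(V_{n-m}+v)\le 0\bigr)
\end{equation*}
in terms of the unshifted one. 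This is precisely the situation addressed by the novel concentration bound of Lemma \ref{lévysym}, which plays the role that unimodality does in \cite{Rainer2007}: for a symmetric jump distribution it controls how much an arbitrary translation can inflate the probability of straddling the origin, and hence yields the required second-moment ratio. Once $\mathbb P(A_n\text{ i.o.})>0$ follows from Kochen-Stone, the Hewitt-Savage $0$-$1$ law applied to the tail $\sigma$-field of the i.i.d.\@ jumps of $(H,V)$ promotes it to $1$.

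For the Fourier equivalence I would Abel-sum the series. Under transience of $(H,V)$ and by aperiodicity, $|\varphi_H\varphi_V|$ stays bounded away from $1$ on $\mathbb T^2\setminus(\mathcal V\times\mathcal V)$, so the contribution outside a small neighbourhood of the origin is uniformly bounded in $r\in[0,1)$. Inserting the Fourier inversion formulas $\mathbb P(H_{n+1}=0)=\tfrac{1}{2\pi}\int_{\mathbb T}\varphi_H(t)^{n+1}\,dt$ and $\mathbb P(V_n=k)=\tfrac{1}{2\pi}\int_{\mathbb T}\varphi_V(s)^n e^{-iks}\,ds$ into
\begin{equation*}
\sum_{n\ge 0}r^{n+1}\,\mathbb P(H_{n+1}=0)\,\mathbb P(0\le V_n\le Y_{n+1}),
\end{equation*}
expanding $\mathbb P(0\le V_n\le Y_{n+1})=\tfrac12\sum_{k\in\mathbb Z}\mathbb P(V_n=k)\,\mathcal T_V(|k|)$ up to explicit boundary terms, and interchanging the $k$-sum with the $V$-Fourier integral, produces the trigonometric kernel $\Phi_V(r,s)$ of (\ref{fourierseries}); summing the resulting geometric series in $n$ yields the denominator $1-r\varphi_H(t)\varphi_V(s)$. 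Taking real parts discards the imaginary boundary contributions and produces the integrand of (\ref{Fourier}), after which a standard Abel-Tauberian argument as $r\uparrow 1$ identifies divergence of the series in (\ref{borelcantelli0}) with that of (\ref{Fourier}).

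The decisive step, and the one I expect to be the main obstacle, is the second-moment bound in the converse direction: because purely symmetric (rather than unimodal) jump distributions of $V$ are allowed, the correlation control required to apply Kochen-Stone cannot be borrowed from the log-concavity arguments of \cite{Rainer2007} and must come from a genuinely new inequality, namely Lemma \ref{lévysym}. Once that concentration estimate is in hand, the remaining manipulations -- the symmetry reduction, the Borel-Cantelli bookkeeping and the Fourier/Abel calculation -- are essentially routine.
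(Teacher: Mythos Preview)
Your proposal is essentially correct and follows the same architecture as the paper's proof: symmetry reduction, first Borel--Cantelli for the easy direction, a second-moment (Kochen--Stone type) argument powered by Lemma~\ref{lévysym} for the hard direction, Hewitt--Savage to upgrade positive probability to probability one, and an Abel/Fourier computation for the integral criterion.

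One structural difference is worth noting. Before the correlation estimate, the paper inserts a \emph{decoupling step}: using the conditional Borel--Cantelli lemma it shows that $\{E_n\ \text{i.o.}\}$ coincides a.s.\ with the event $\bigl\{\sum_n\mathds 1_{\{H_{n+1}=0\}}\mathcal T_V(V_n)\mathds 1_{\{V_n\ge 0\}}=\infty\bigr\}$, which allows the actual jump $Z_{n+1}$ to be replaced by an independent copy $Z_{n+1}^\perp$. The second-moment bound is then carried out for the decoupled events $E_n^\perp$, and the shift that has to be absorbed via Lemma~\ref{lévysym} is only by $V_k$ (not $V_{k+1}$). Your direct approach---conditioning on $\mathcal F_{m+1}^V$ without decoupling---leads to exactly the same use of Lemma~\ref{lévysym} (the interval to be controlled still has random length $|Y|$), so nothing is lost; the paper's extra step merely streamlines the bookkeeping. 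A minor terminological point: the event $\{A_n\ \text{i.o.}\}$ lies in the \emph{exchangeable} $\sigma$-field of the i.i.d.\ increments rather than the tail $\sigma$-field, which is what Hewitt--Savage requires. For the Fourier part, the paper introduces two Abel parameters (one for the $n$-sum, one for the tail series defining $\Phi_V$) and removes the first via the Chung--Fuchs transience criterion; your single-$r$ description captures the same mechanism.
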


\begin{rem}
If the mass function of symmetric jump distribution is ultimately non-increasing, the $r$-limit  in (\ref{Fourier}) can be dropped and one can set $r=1$ in (\ref{fourierseries}) leading to a simpler  criterion. 
\end{rem}

In order to prove this result when the distributions are symmetric and unimodal in \cite{Rainer2007}, the authors  invoke an appropriate Borel-Cantelli lemma relying crucially on an unimodality assumption.

As far as we are concerned, we only need Lemma \ref{lévysym} below on Lévy concentration functions. It is surprisingly, to our knowledge, not mentioned anywhere. We recall that the Lévy concentration function of a real  random variable $X$ is defined for all $\lambda\geq 0$ by
\begin{equation}
Q(X,\lambda)=\sup_{x\in\mathbb R}\mathbb P(x\leq X\leq x+\lambda).
\end{equation}
The following fundamental Lemma means -- roughly speaking --  that the supremum is reached near the origin for the symmetric distributions.

\begin{lem}[Lévy concentration function of symmetric random walks]\label{lévysym}
Let $\{M_n\}_{n\geq 0}$ be a symmetric random walk. Then there exists two positive universal constants $L$ and $C$  such that for all $p>0$ for which the characteristic function of the jumps is non-negative on $[-p,p]$ and all $n\geq 1$ and $\lambda\geq L/p$,
\begin{equation}\label{maj1}
\mathbb P(0\leq M_n\leq \lambda)\leq Q(M_n,\lambda)\leq C\,\mathbb P(0\leq M_n\leq \lambda).
\end{equation}
\end{lem}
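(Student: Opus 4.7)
The left-hand inequality is immediate from the definition of the concentration function. For the right-hand one, I scale $(M_n,\lambda)\mapsto (pM_n,p\lambda)$ to reduce to $p=1$, so the aim becomes: for $\lambda\geq L$ (some universal $L$) and $n\geq 1$, $Q(M_n,\lambda)\leq C\,\mathbb P(0\leq M_n\leq\lambda)$. Denote the law of $M_n$ by $\nu_n$ (symmetric) and its characteristic function by $\varphi_n=\varphi^n$, which is non-negative on $[-1,1]$.

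The core Fourier input is the following one-sided inequality. For any non-negative $K\in L^1(\mathbb R)$ whose Fourier transform $\widehat K$ is itself non-negative and supported in $[-1,1]$, the symmetry of $\nu_n$ (so $\varphi_n$ is real and even) together with $\varphi_n\geq 0$ on $[-1,1]$ yields, for every $x\in\mathbb R$,
$$\int K(y-x)\,d\nu_n(y) = \frac{1}{2\pi}\int_{-1}^{1}\widehat K(t)\,\varphi_n(t)\,\cos(tx)\,dt \;\leq\; \int K(y)\,d\nu_n(y).$$
Choosing $K$ to be a non-negative bandlimited majorant of $\mathds 1_{[-\lambda/2,\lambda/2]}$ --- for instance a rescaled Fejér kernel $K(y)=c_m\,\mathrm{sinc}^{2m}(\pi y/\lambda)$ with Fourier support inside $[-1,1]$, which forces $\lambda\geq L:=2m\pi$ --- and combining with the pointwise majorization yields $Q(M_n,\lambda)\leq\int K(y)\,d\nu_n(y)$.

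The plan for bounding $\int K\,d\nu_n$ is a dyadic decomposition. The core contribution $\int_{|y|\leq\lambda/2}K\,d\nu_n$ is at most $\|K\|_\infty\,\nu_n([-\lambda/2,\lambda/2])\leq 2\|K\|_\infty\,\mathbb P(0\leq M_n\leq\lambda)$ by symmetry of $\nu_n$. The tail is controlled via the polynomial decay of $K$:
$$\int_{|y|>\lambda/2}K\,d\nu_n \;\leq\; 2\,Q(M_n,\lambda/2)\sum_{k\geq 1}\sup_{y\in [k\lambda/2,\,(k+1)\lambda/2]}K(y),$$
since each length-$\lambda/2$ slab carries mass at most $Q(M_n,\lambda/2)\leq Q(M_n,\lambda)$. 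Provided the kernel is tuned so that this tail multiplier is strictly less than $1/2$, an absorption rearrangement produces the desired bound with an explicit constant $C$ depending on $\|K\|_\infty$ and the tail sum.

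The principal obstacle is the delicate construction of $K$: the joint requirements that $K\geq 0$ and $\widehat K\geq 0$ (making $K$ a non-negative positive-definite function), that $\widehat K$ be supported in $[-1,1]$, that $K$ pointwise majorize $\mathds 1_{[-\lambda/2,\lambda/2]}$, and that $K$ decay sharply past $|y|=\lambda/2$ form a restricted variant of the Beurling--Selberg extremal majorant problem. The naïve Fejér-type kernels saturate the majorization exactly at $y=\pm\lambda/2$, so additional refinement will be needed --- e.g., taking $m$ large in the $\mathrm{sinc}^{2m}$ family while slightly inflating the majorized interval so that the first slab of the dyadic decomposition lies safely past the transition region, or employing more sophisticated double-positive kernels --- before the absorption closes. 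The universal constants $L$ and $C$ then emerge from this tuning.
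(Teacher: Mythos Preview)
Your Fourier-analytic strategy is sound and close in spirit to the paper's, but as you yourself flag, the absorption step is left open, and with your stated choices it in fact cannot close. With $K(y)=c_m\bigl(\sin(\pi y/\lambda)/(\pi y/\lambda)\bigr)^{2m}$ and $c_m=(\pi/2)^{2m}$ (forced by $K(\pm\lambda/2)=1$), the slabs $[k\lambda/2,(k+1)\lambda/2]$, $k\geq 1$, carry $\sup K\leq k^{-2m}$, so the two-sided tail multiplier is $2\sum_{k\geq 1}k^{-2m}=2\zeta(2m)\geq 2$ for every $m$: no amount of raising $m$ helps. The remedy is not further kernel engineering but simply moving the core/tail cut from $|y|=\lambda/2$ to $|y|=\lambda$ and using slabs of width $\lambda$. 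The core then already gives $2\|K\|_\infty\,\mathbb P(0\leq M_n\leq\lambda)$ by symmetry, while the tail multiplier becomes $2\sum_{k\geq 1}(2k)^{-2m}=2^{1-2m}\zeta(2m)$, which for $m=2$ equals $\pi^4/720<1/2$; the argument then closes with $L=4\pi$ and an explicit $C$.

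The paper's proof is organized differently and sidesteps the doubly-positive majorant problem. It first invokes Esseen's classical two-sided bound $Q(M_n,\lambda)\asymp \lambda\int_{-\pi/\lambda}^{\pi/\lambda}\varphi(t)^n\,dt$ (valid once $\varphi\geq 0$ on $[-p,p]$ and $\lambda>2\pi/p$), and then uses only the simplest Fej\'er pair $h(t)=(1-|t|)^+$, $H(x)=(\sin(x/2)/(x/2))^2$ in the Parseval identity at $\xi=0$ to obtain
\[
\frac{\lambda}{4\pi}\int_{-\pi/\lambda}^{\pi/\lambda}\varphi^n \;\leq\; 2\,\mathbb P(0\leq M_n\leq N\lambda)\;+\;\Bigl(2\sum_{k\geq N}\frac{1}{\pi^2 k^2}\Bigr)Q(M_n,\lambda),
\]
with $N$ a free integer. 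Choosing $N$ large makes the second term absorbable, yielding $Q(M_n,\lambda)\leq C'\,\mathbb P(0\leq M_n\leq N\lambda)$; the final step applies this at scale $\lambda/N$ and uses the elementary subadditivity $Q(M_n,\lambda)\leq (N+1)\,Q(M_n,\lambda/N)$ to recover the bound at scale $\lambda$. In short, where you try to build a single kernel that is simultaneously positive, positive-definite, bandlimited, a majorant of $\mathds 1_{[-\lambda/2,\lambda/2]}$, and sharply decaying past $\lambda/2$, the paper decouples these constraints by introducing the free parameter $N$ and paying for it afterward via subadditivity of the concentration function.
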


\begin{rem}
One can obtain similar bounds replacing the condition $0\leq M_n\leq \lambda$  in (\ref{maj1}) by the symmetric one  $|M_n|\leq \lambda/2$. Besides, one can note that $Q(M_n,\lambda)=\mathbb P( |M_n|\leq  \lambda/2)$ when the jump distribution is unimodal and non-atomic. 
\end{rem}

At this level, we may apply Theorem \ref{critère} to provide a necessary and sufficient criteria for a wide class of PRWs, built from a quadruple infinite comb as in Section \ref{quadruple}, under the following assumptions.

\begin{ass}[generalized DRRWs]\label{modelrestrein} Let $\{S_n\}_{n\geq 0}$ be a quadruple infinite comb PRW starting from the origin, the initial time  being a vertical-to-horizontal change of direction as in Figure \ref{marche}, such that
\begin{enumerate}
\item[\rm (H1)] the persistence times when the walker moves horizontally or vertically are independent of each other and {\it i.i.d.\@}. Their distributions are respectively denoted by $\nu_{\mathtt h}(dt)$ and  $\nu_{\mathtt v}(dt)$;
\item[\rm (H2)] the probabilities to change from the current direction into an orthogonal one only depend on the final direction (among east, north, west or south) and are constant with respect to the absolute directions (horizontal or vertical).
Those are denoted by  
\begin{equation}
p_{\mathtt e}=p_{\mathtt w}=\frac{1-p_{\mathtt v}}{2}\quad\mbox{and}\quad p_{\mathtt n}=p_{\mathtt s}=\frac{1-p_{\mathtt h}}{2},
\end{equation}
in such way that $p_{\mathtt h}$ and $p_{\mathtt v}$ stand respectively for the probabilities to stay in the current horizontal and vertical direction at each breaking time.  
\end{enumerate}
\end{ass}

This framework includes two types of PRWs which are of particular interest when the waiting time distributions are equal, that is $\nu_{\mathtt h}(dt)=\nu_{\mathtt v}(dt)$:
\begin{itemize}
\item Original DRRWs if $p_{\mathtt h}=p_{\mathtt v}=1/3$.
\item DRRWs without U-turns (non-backtracking DRRWs) if $p_{\mathtt h}=p_{\mathtt v}=0$.
\end{itemize}

Non-backtracking DRRWs are natural generalizations of the symmetric one-dimensional PRWs investigated in \cite{PRWI} and was the original motivation of this work. 

Let us introduce a symmetric Rademacher random variable $\varepsilon$, two geometric random variables  $G_\mathtt h$ and $G_\mathtt v$ with parameters $1-p_{\mathtt h}$ and $1-p_{\mathtt v}$, and two sequences of {\it i.i.d.\@} random variables $\{\tau_{k}^{\mathtt h}\}_{k\geq 1}$ and $\{\tau_{k}^{\mathtt v}\}_{k\geq 1}$ distributed as $\nu_{\mathtt h}(dt)$ and $\nu_{\mathtt v}(dt)$. We assume that all of these are independent of each others. Then we can consider two independent random walks $\{H_n\}_{n\geq 0}$ and $\{V_n\}_{n\geq 0}$ whose respective jumps are distributed as 
\begin{equation}\label{geometric}
\varepsilon\sum_{k=1}^{G_{\mathtt h}}(-1)^{k-1}\tau_k^{\mathtt h}\quad\mbox{and}\quad \varepsilon\sum_{k=1}^{G_{\mathtt v}}(-1)^{k-1}\tau_k^{\mathtt v},
\end{equation}
and state a necessary and sufficient criterion for the recurrence of these specific PRWs.

\begin{cor}[necessary and sufficient criteria for generalized DRRWs]\label{coro} 
Under Assumption \ref{modelrestrein} the origin is recurrent for $\{S_n\}_{n\geq 0}$ if and only if
\begin{equation}\label{borelcantelli}
\sum_{n=0}^\infty \mathbb P(H_{n+1}=0)\mathbb P(0\leq V_{n}\leq V_{n+1}-V_n)=\infty\;\mbox{ or }\;\sum_{n=0}^\infty \mathbb P(V_{n+1}=0)\mathbb P(0\leq H_n\leq H_{n+1}-H_n)=\infty.
\end{equation}
\end{cor}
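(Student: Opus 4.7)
The plan is to reduce the problem to a direct application of Theorem \ref{critère}, together with a comparison between ``dipping'' and ``straddling'' events in the zigzag chunks.

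First, I will verify that the sequences $\{H_n\}_{n\ge 0}$ and $\{V_n\}_{n\ge 0}$ as constructed in the statement are indeed two independent, symmetric one-dimensional random walks with step distributions prescribed by \eqref{geometric}. Independence follows from (H1), while the common law of each increment follows from (H2) and the strong Markov property at each breaking time: the symmetric Rademacher $\varepsilon$ encodes the uniform choice between the two opposite directions, and the geometric sum of alternating $\tau_k^\mathtt{h}$ (respectively $\tau_k^\mathtt{v}$) records all the U-turns performed within a single chunk. Since both walks are symmetric, Theorem \ref{critère} is applicable with either of them playing the role of the symmetric ``second'' walk.

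For the \emph{sufficient direction}, I would observe that during the $(n+1)$-th vertical chunk of the PRW, the horizontal coordinate is frozen at $H_{n+1}$ while the vertical coordinate traces a unit-step zigzag from $V_n$ to $V_{n+1}$. Whenever $H_{n+1}=0$ and $V_nV_{n+1}\le 0$, this zigzag necessarily passes through $0$, so the PRW visits the origin during the chunk. Divergence of the first series in \eqref{borelcantelli} therefore gives, via Theorem \ref{critère}, infinitely many origin visits; the case of the second series is handled symmetrically by exchanging $H$ and $V$ (the index shift in the summation being asymptotically harmless since $\mathbb P(V_n=0)\sim\mathbb P(V_{n+1}=0)$).

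For the \emph{necessary direction}, assume both series converge. Theorem \ref{critère} controls only the straddling origin visits, yet the PRW might in principle also visit the origin during a chunk whose endpoints lie strictly on the same side, via a large ``dipping'' excursion of the zigzag through $0$. The main obstacle is to rule out these dip events. To this end I would exploit the structural observation that the even subsequence $\{S_{2k}^\mathtt{v}\}$ of the signed partial sums $S_k^\mathtt{v}=\sum_{j=1}^{k}(-1)^{j-1}\tau_j^\mathtt{v}$ is itself a symmetric random walk (with iid symmetric increments $\tau_{2k-1}^\mathtt{v}-\tau_{2k}^\mathtt{v}$), so L\'evy's maximal inequality bounds the probability that the zigzag dips below $-v$ by twice the probability that the stopped even sub-walk reaches $-v$; Lemma \ref{lévysym} then provides the comparison between this tail and the straddle probability $\mathbb P(V_nV_{n+1}\le 0\mid V_n=v)$, absorbing the mismatch between $G_\mathtt{v}$ and $2\lfloor G_\mathtt{v}/2\rfloor$. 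Summed over $n$, this yields $\sum_n \mathbb P(\text{origin visited during chunk }n)<\infty$, and Borel--Cantelli gives the transience of $\{S_n\}$.
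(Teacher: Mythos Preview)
Your proposal is essentially correct and follows the same route as the paper: Theorem~\ref{critère} handles the sufficient direction via straddling, and for the necessary direction you control the ``dipping'' excursions by applying L\'evy's maximal inequality to the symmetric sub-walk hidden in the zigzag, exactly as the paper does (the paper groups the $A_l$ at odd indices, writing $A_{2i+1}=\tau_1^{\mathtt v}+\sum_{k=1}^i(\tau_{2k+1}^{\mathtt v}-\tau_{2k}^{\mathtt v})$, and invokes the maximal inequality from \cite{Petrov}; your even-index grouping is the dual of this).

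Two small points. First, your appeal to Lemma~\ref{lévysym} at the very end is unnecessary: once L\'evy's inequality gives $\mathbb P(\max_{l\le 2i+1}A_l\ge v)\le 2\,\mathbb P(A_{2i+1}\ge v)$, summing against $\mathbb P(V_n=v)$ already yields $\mathbb P(0\le V_n\le \max_l A_l)\le 2\,\mathbb P(0\le V_n\le A_{2i+1})$; the remaining step is purely the parity mismatch between $G_{\mathtt v}$ and the nearest odd index, which the paper handles by the elementary identity $\mathbb P(G_{\mathtt v}=2i+2)=p_{\mathtt v}\,\mathbb P(G_{\mathtt v}=2i+1)$, producing the clean constant $2(1+p_{\mathtt v})$. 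No concentration-function comparison is needed. Second, your justification of the index shift via $\mathbb P(V_n=0)\sim\mathbb P(V_{n+1}=0)$ is both unneeded and not obviously valid in this generality; the two series in question differ only by boundary terms, so their divergence is trivially equivalent.
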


Thereafter, we answer by the negative to the conjecture in \cite{Mauldin1996} and moreover produce a constructive method to build recurrent PRWs with transient MRW skeletons.

\begin{theo}[definitive invalidation of the conjecture and more]\label{conjecture}
There exists waiting time distributions on the positive integers $\{1,\cdots\}$  such that the associated DRRWs  and non-backtracking DRRWs in $\mathbb Z^2$ are recurrent whereas their MRW skeletons are transient.
\end{theo}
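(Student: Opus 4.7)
The plan is to revisit the constructive scheme of \cite{Rainer2007}, retaining its overall architecture while (i) forcing the persistence time laws to be supported on the strictly positive integers $\{1,2,\dots\}$, as required by genuine DRRWs and non-backtracking DRRWs, and (ii) removing the unimodality assumption via the universal concentration estimate of Lemma~\ref{lévysym}. The strategy is to produce a single common law $\nu_\mathtt{h}=\nu_\mathtt{v}=\nu$ such that the two-dimensional skeleton has summable return probabilities (and is therefore transient by Proposition~\ref{dicho2}), while at the same time the oscillation series of Corollary~\ref{coro} diverges, so that the PRW itself is recurrent.

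\textbf{Construction of $\nu$.} Following the spirit of \cite{Rainer2007}, I would take $\nu$ in the domain of attraction of a symmetric $\alpha$-stable law of index $\alpha\in(1,2)$, but with mass concentrated on a lacunary sequence of scales $a_k\uparrow\infty$ of super-geometric growth, the mass assigned to the window around $a_k$ being of order $a_k^{-\alpha}$. This discrete design ensures that a single jump occasionally dominates the accumulated sum at specific \emph{resonant} times $n_k$ for which the diffusive scale $n_k^{1/\alpha}$ matches $a_k$; at the same time, by inserting a small smoothing component supported on a few small integers, one can keep the characteristic function of $\nu$ non-negative on a neighborhood $[-p,p]$ of the origin, which is precisely the hypothesis needed to apply Lemma~\ref{lévysym}. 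Because the geometric sums in \eqref{geometric} only multiply each scale by a bounded factor independent of $p_\mathtt{h}$ and $p_\mathtt{v}$, the same $\nu$ will cover simultaneously the classical DRRW case ($p_\mathtt{h}=p_\mathtt{v}=1/3$) and the non-backtracking one ($p_\mathtt{h}=p_\mathtt{v}=0$).

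\textbf{Verification and main obstacle.} For transience of the skeleton, independence of $H$ and $V$ together with stable local limit estimates give $\mathbb{P}(Z_{2n}=0)=\mathbb{P}(H_n=0)\mathbb{P}(V_n=0)=O(n^{-2/\alpha})$, which is summable since $\alpha<2$; the odd-indexed terms are controlled similarly, and Proposition~\ref{dicho2} concludes. For recurrence of the PRW, Corollary~\ref{coro} reduces the question to the divergence of
\begin{equation*}
\sum_{n\ge 0}\mathbb{P}(H_{n+1}=0)\,\mathbb{P}(0\le V_n\le V_{n+1}-V_n).
\end{equation*}
Conditioning on the value $v$ of the fresh jump $V_{n+1}-V_n$, the inner factor becomes an average of $\mathbb{P}(0\le V_n\le v)$, which by the one-sided comparison $\mathbb{P}(0\le V_n\le\lambda)\gtrsim Q(V_n,\lambda)$ of Lemma~\ref{lévysym} (valid whenever $\lambda\ge L/p$) is bounded below by the Lévy concentration function of $V_n$ on the scale $v$. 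At the resonant scale $a_k$ this concentration is bounded below by a universal constant, so that each resonant block contributes at least $\mathbb{P}(H_{n_k+1}=0)\cdot\nu([a_k,\infty))\gtrsim n_k^{-1/\alpha}\cdot a_k^{-\alpha}$ summed over the $n_k$ terms of the block, and a careful tuning of $a_k$ versus $n_k$ makes the $k$th resonant block contribute at least $1/k$, whence divergence. The main technical obstacle is precisely this compatibility: one must simultaneously preserve the stable scaling guaranteeing skeleton transience, ensure non-negativity of $\widehat{\nu}$ on some $[-p,p]$ so that Lemma~\ref{lévysym} applies (the step where \cite{Rainer2007} invoked unimodality instead), and synchronize the sequence $n_k$ with the scales $a_k$ so that the resonant contributions indeed dominate the divergent series.
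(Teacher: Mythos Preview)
Your proposal contains a genuine gap at its core: the two ingredients you want to combine are mutually exclusive. A distribution whose mass is concentrated on a super-geometrically growing lacunary sequence $a_k$ has a tail function that oscillates wildly between scales (for $t\in(a_{k-1},a_k)$ one has $\mathbb P(|X|>t)\asymp a_k^{-\alpha}$, not $t^{-\alpha}$), so it is \emph{not} in the domain of attraction of any stable law and no stable local limit theorem is available to give $\mathbb P(H_n=0)=O(n^{-1/\alpha})$. Conversely, if you insist on a genuine $\alpha$-stable scaling so that $\mathbb P(H_n=0)\asymp n^{-1/\alpha}$, then a direct computation shows $\mathbb P(0\le V_n\le V_{n+1}-V_n)\asymp n^{-1/\alpha}$ as well (for $\alpha\in(1,2)$ the fresh jump has finite mean, so the dominant contribution is $\mathbb E[|X|]\,n^{-1/\alpha}$), and hence the oscillation series and the return series are \emph{both} of order $\sum n^{-2/\alpha}$ and converge or diverge together. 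No choice of $\alpha$ separates them, and your resonant-block arithmetic confirms this: with $n_k\asymp a_k^\alpha$ the $k$th block contributes $n_k^{1-1/\alpha}a_k^{-\alpha}=a_k^{-1}$, which is summable for any super-geometric $a_k$ and cannot be tuned to give $1/k$.

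The paper's construction is of an entirely different nature and does not postulate any scaling. Following \cite{Grey,Rainer2007}, it builds $\mu$ as a limit of coupled \emph{defective} distributions $\mu_k$, each uniform on finitely many symmetric intervals $\mathbb I_0,\dots,\mathbb I_k$ with masses $p_0,\dots,p_k$ chosen inductively under five explicit constraints. The transience estimate $\sum_n\mathbb P(H_n=0)^2<\infty$ is obtained not via a local limit theorem but by a recursive bound $\mathbb P(W_n^k=0)\le\mathbb P(W_n^{k-1}=0)+(l_kp_k)^{-1}(1-q_k)^n/\sqrt n$, proved by conditioning on the number of jumps landing in the outermost shell $\mathbb I_k$ and using a combinatorial concentration bound for sums of uniform variables. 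The divergence of the oscillation series comes from a lower bound of the form $y_k(y_k+l_k)^{-2}\ln(1/p_{k+1})$, obtained via Bobkov's concentration inequality and Lemma~\ref{lévysym}; the inductive constraints are precisely engineered so that the square-root of the first quantity is dominated by $\sum_k k^{-1-\delta/2}$ while the second tends to infinity. The passage from non-backtracking DRRWs to genuine DRRWs then requires a further conditioning on the geometric variables $G_n$, which preserves both estimates up to constants. The key point you are missing is that the separation of the two series is achieved not by a global exponent but by this stage-by-stage freedom in choosing $(l_k,y_k,p_{k+1})$.
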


We can deduce from the Fourier criterion (\ref{Fourier}) that  such distributions are necessarily non-integrable and we provide a generic and inductive construction. Note that in the case of non integrable persistent times, there is no invariant probability measure for the associated VLMC. Inspired by \cite{Shepp}, one could ask for a proof relying on Fourier analysis. Such an approach has seemed to us tedious. That is why our preferences go to a more concrete probabilistic proof in the spirit of \cite{Grey,Rainer2007}.

\section{Proofs of Section \ref{secFourier}}

\label{prooffourier}

\setcounter{equation}{0}

We begin with Proposition \ref{eigenvalue} which lay the groundwork for Theorem \ref{Zcriterion} and Proposition \ref{expansprop}.

\begin{proof}[Proof of Proposition \ref{eigenvalue}]
First, we get from Assumptions ($\hyperlink{P1}{\rm P1}$) and ($\hyperlink{O1}{\rm O1}$) that ${\rm ker}(P-I)=\mathbb C.\mathds 1$ in $\mathcal B$. Together with the spectral gap condition  ($\hyperlink{O2a}{\rm O2a}$) and since any isolated element of the spectrum is an eigenvalue, the spectral radius of $P$ is necessarily  equal to $1$. Besides, the eigenvalue $1$ is necessarily of algebraic multiplicity one. Otherwise, the operator $P-I$ would induce a linear and surjective map from ${\rm ker}(P-I)^2\supsetneq \mathbb C.\mathds 1$ to $\mathbb C.\mathds 1$ and thus there would exist $f\in\mathcal B$ such that $Pf=f+\mathds 1$, in contradiction with the Perron-Frobenius Theorem. Furthermore, an other application of this theorem shows  us that if $\lambda$ is an eigenvalue of $P$ on the unit circle then $\lambda=1$. It remains to extend continuously those results on a neighbourhood of the origin by the mean of the perturbation theory.

The existence of a neighbourhood $\mathcal V$ of the origin such that the first and the second points of Proposition \ref{eigenvalue} are satisfied follows directly from \cite[Theorem 3.16., p.212]{Kato} together with the latter considerations and the continuity  hypothesis (\hyperlink{O2b}{02b}). To prove the third point, we also use the perturbation theory but we need to take care about the (possibly) infinite dimensional situation when we use the Cauchy holomorphic functional calculus. 

In fact, applying more precisely \cite[Theorem 3.16., p.212]{Kato}, one deduce  there exist $\delta>0$ and  a positively-oriented curve $\Gamma$ enclosing $1$ such that for any bounded linear perturbation $H$ smaller that $\delta$ there exists a unique element $\lambda(H)$ of maximal modulus in the spectrum of $P+H$. The latter is again an eigenvalue of algebraic multiplicity one but also the unique element of the spectrum inside $\Gamma$. Besides, since the resolvent $R_H(\xi):=(P+H-\xi)^{-1}$ is holomorphic  outside the (compact) spectrum of $P+H$ we can consider the following so called Dunford integral
\begin{equation}\label{projector}
Q_H:=-\frac{1}{2\pi i}\int_{\Gamma} R_H(\xi)d\xi,
\end{equation} 
which do not depend on such $\Gamma$. It turns out that $H\longmapsto Q_H$ is continuous in a neighbourhood of the origin. Writing the Laurent series expansion of $R_H(\xi)$ around $\lambda (H)$ it is classical that $Q_H$ is the continuous projector on the generalized eigenspace associated with $\lambda (H)$. Since  the latter  is  of multiplicity one,  this space is one-dimensional and thus  
\begin{equation}\label{lambda}
\lambda (H)-1={\rm Tr}(Q_H (P+H))-1={\rm Tr}((P+H-1) Q_H).
\end{equation}
Here we denote by ${\rm Tr}$ the linear trace defined on the finite-rank operator ideal. 

\begin{lem}\label{rankone}\label{tracecont}
The trace operator is continuous on the space of rank-one bounded linear operators  endowed with the induced subordinated norm distance.
\end{lem}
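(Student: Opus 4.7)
The plan is to exploit the canonical description of a rank-one bounded operator on $\mathcal{B}$: any such $T$ can be written as $T = \varphi \otimes x$ with $\varphi \in \mathcal{B}^{*}$ and $x \in \mathcal{B}$, where $(\varphi \otimes x)(y) := \varphi(y)\, x$. One then has ${\rm Tr}(T) = \varphi(x)$ and $\|T\|_{\rm op} = \|\varphi\|_{\mathcal{B}^{*}}\, \|x\|_{\mathcal{B}}$, so the duality inequality $|\varphi(x)| \leq \|\varphi\|\,\|x\|$ yields the fundamental bound
\begin{equation*}
|{\rm Tr}(T)| \leq \|T\|_{\rm op} \qquad \text{for every rank-one } T.
\end{equation*}
This estimate disposes of the case of convergence to zero at once: if $T_n$ is rank-one and $T_n \to 0$ in operator norm, then $|{\rm Tr}(T_n)| \leq \|T_n\|_{\rm op} \to 0$.

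For the nontrivial case, I fix a rank-one limit $T \neq 0$ together with a distinguished decomposition $T = \varphi \otimes x$ (both factors nonzero) and choose once and for all an auxiliary vector $y_0 \in \mathcal{B}$ with $\varphi(y_0) = 1$, so that $T(y_0) = x$. Since $T_n(y_0) \to T(y_0) = x \neq 0$, for $n$ large the vector $T_n(y_0)$ is nonzero, and any decomposition of the rank-one operator $T_n$ can be uniquely renormalized as $T_n = \varphi_n \otimes x_n$ with $\varphi_n(y_0) = 1$; this forces $x_n = T_n(y_0) \to x$ in $\mathcal{B}$. I then pick $\eta \in \mathcal{B}^{*}$ with $\eta(x) = 1$ and exploit the identity $\varphi_n(z)\,\eta(x_n) = \eta(T_n(z))$ together with the uniform estimate
\begin{equation*}
\sup_{\|z\|\leq 1} \bigl|\eta(T_n(z)) - \eta(T(z))\bigr| \leq \|\eta\|_{\mathcal{B}^{*}}\,\|T_n - T\|_{\rm op}.
\end{equation*}
Combined with $\eta(x_n) \to 1$, this delivers $\varphi_n \to \varphi$ in the dual norm, whence ${\rm Tr}(T_n) = \varphi_n(x_n) \to \varphi(x) = {\rm Tr}(T)$.

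The only genuine obstacle is the scalar non-uniqueness of the decomposition $T = \varphi \otimes x$, which is circumvented by pinning down a normalization via the auxiliary vector $y_0$ and the auxiliary linear form $\eta$; after that, everything reduces to testing the operator-norm convergence $T_n \to T$ on the single vector $y_0$ and on functions of the form $z \mapsto \eta(T_n z)$, and relies only on the Hahn--Banach theorem to ensure the existence of such $y_0$ and $\eta$.
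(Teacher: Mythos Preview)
Your proof is correct and follows essentially the same strategy as the paper's: represent each rank-one operator as $\varphi\otimes x$, fix a normalization of the decomposition, and show that both factors converge. Your normalization via $\varphi_n(y_0)=1$ (and the auxiliary $\eta$) is in fact cleaner than the paper's choice $\|v_n\|=1$, since it pins down the decomposition uniquely and sidesteps any sign or phase ambiguity; you also explicitly dispose of the limit $T=0$ via the sharp bound $|{\rm Tr}(T)|\leq\|T\|_{\rm op}$, which the paper leaves implicit.
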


\begin{proof}
Let $(T_n)$ be a sequence of continuous rank-one operators converging  to $T$ for the subordinated norm. There exist continuous linear forms $(\varphi_n)$ and $\varphi$  and vectors $(v_n)$ and $v$ such that 
\begin{equation*}
{\rm Tr}(T_n)=\varphi_n(v_n)\quad\mbox{and}\quad {\rm Tr}(T)=\varphi(v),
\end{equation*}
where $T_n$ and $T$ are respectively represented as $\varphi_n\otimes v_n$ and $\varphi\otimes v$. Besides, we can assume that $(v_n)$ and $v$ are of norm $1$ and  then  necessarily  $v_n\longrightarrow v$ and $\varphi_n\longrightarrow \varphi$ for the subordinated norm. In particular, we deduce the convergence ${\rm Tr}(T_n)\longrightarrow {\rm Tr}(T)$.
\end{proof}

Finally, the continuity of the perturbed eigenprojector, the  representation (\ref{lambda}) and  Lemma \ref{tracecont} above imply that $H\longmapsto\lambda (H)$ is continuous  on a neighbourhood of the origin.  By using (\hyperlink{O2b}{O2b}) we get  the continuity of $\lambda(t)$ on a neihgbourhood of the origin  since we can write $\lambda(t)=\lambda (P_t-P)$.

It remains to prove the last and fourth point. Let us denote by $\rho(T)$ the spectral radius of a bounded linear operator $T$ on a Banach space. Since $T\longmapsto \rho(T)$ is upper semi-continuous, so is $t \rightarrow \rho(P_t)$ by the continuity assumption. It follows that $t \rightarrow \rho(P_t)$ reaches its maximum $M$ on any compact set $K\subset\mathbb T^d$ at some point $t^\ast\in K$. Let $\lambda$ be a peripheral spectral value of $P_{t^\ast}$ so that $|\lambda|=M$. It comes from Assumption (\hyperlink{O2c}{O2c}) that $P_{t^\ast}h=\lambda h$ for some eigenvector $h\in\mathcal B$. Since the modulus of a characteristic function is lower than one, the triangle inequality gives for every $c\in\mathcal S$,
\begin{equation}\label{maj2}
M |h(c)|\leq 
\sum_{s\in\mathcal S} |\widehat{\mu}_{c,s}(t^\ast)||h(s)| P(c,s)
\leq P|h|(c).
\end{equation}
Note that $M \leq 1$ and that $P^n|h|$ converge pointwise towards $\pi(|h|) \mathds 1$ by ergodicity. Suppose now $M=1$ and $t^\ast\neq 0$. By using (\ref{maj2}) and the latter two remarks we get $|h| \leq \pi(|h|) \mathds 1$ and thus $|h|\in{\rm span}(\mathds 1)$. This implies that $|\widehat \mu_{c,s}(t)|=1$ as soon as $P(c,s)\neq 0$. However, this property is equivalent for the MRW to be periodic, which is excluded. As a consequence, for any neighbourhood $\mathcal V\subset\mathbb T^d$ of the origin,
\begin{equation}\label{rayonspectral}
\sup_{t\in \mathbb T^d\setminus \mathcal V} \rho(P_t)<1.
\end{equation}
Since $|\lambda(t)|=\rho(P_t)$ on $\mathcal V$, the proof of the proposition is completed.
\end{proof}

\begin{proof}[Proof of Theorem \ref{Zcriterion}]
First, the Markov additive property implies for every $n\geq 1$ and $\pi$-integrable or non-negative function $f$ on $\mathcal S$ the identity    
\begin{equation}\label{puissance}
P_t^n f(c)=\mathbb E_c[e^{it.Z_n}f(C_n)].
\end{equation}
We show that 
\begin{equation}\label{limitr}
\sum_{n \geq 0} \mathbb{P}_{\nu}(Z_n=0) = \lim_{r \uparrow 1} \sum_{n \geq 0} r^n \mathbb{P}_\nu(Z_n=0) = \lim_{r \uparrow 1}\int_{\mathbb{T}^d} \Re(\nu(1-rP_t)^{-1} \mathds 1)\,dt.
\end{equation} 
To this end, write the resolvent operator $(1-rP_t)^{-1}$ on $\mathcal B$ as the classical series expansion of the   bounded operators $r^n P_t^n$ when $0<r<1$. Also, remark  that  $t\longmapsto \nu (r^n P_t^n)\mathds 1$ is continuous by assumptions  and bounded by $r^n$ from (\ref{puissance}). Then applying the dominated convergence theorem, we deduce (\ref{limitr}).

To go further, let $0<\epsilon<1$ be given by (\ref{rayonspectral}) so that $\rho(P_t)\leq 1-\epsilon$ for every $t\notin\mathcal V$. Again, it follows from the series expansion of the resolvent that there exists $C>0$ such that $\|(1-rP_t)^{-1}\|_{\mathcal B}\leq C\epsilon^{-1}$ for every $0<r<1$ and $t\notin \mathcal V$. Since $\nu$ is assumed to be a  continuous linear form on $\mathbb{L}^1(\pi)$, it is also continuous on $\mathcal B$ from (\hyperlink{O1}{O1}) -- say of norm $N_1$. Denoting by $N_2$ the $\mathcal B$-norm of $\mathds 1$ we get
\begin{equation}\label{maj-exterieur}
\left|\lim_{r\uparrow 1}\int_{\mathbb{T}^d\setminus\mathcal V } \Re(\nu(1-rP_t)^{-1} \mathds 1) dt\right|\leq C\epsilon^{-1}N_1N_2<\infty.
\end{equation} 
Therefore, the finiteness or not of the $r$-limit on the right-hand side of (\ref{limitr}) is completely given by the same $r$-limit but integrating on any (or some) neighbourhood of the origin.

Let us write $P_t=\lambda(t)Q_t+E_t$ where $Q_t:=Q_{P_t-P}$ is the one-dimensional projector on the eigenspace associated with $\lambda(t)$ defined by (\ref{projector}). Note that  $E_t$ can be seen as the restriction of $P_t$ to the stable subspace ${\ker}(Q_t)={\Im}(1-Q_t)$ and $Q_t$ the restriction of $P_t$ to the one-dimensional supplementary subspace ${\ker}(1-Q_t)={\Im}(Q_t)$. Besides, another use of  \cite[Theorem 6.17., p. 178]{Kato} with the  spectral gap condition (\hyperlink{O2b}{O2b}) gives $0<\epsilon<1$ such that $\rho(E_t)\leq 1-\epsilon$ for every $t\in\mathcal V$. In addition, the operators $Q_t$ and $E_t$ commute so that $P_t^n=\lambda(t)^n Q_t+E_t^n$ for every $n\geq 1$. It comes 
\begin{equation*}
\Re(\nu(1-rP_t)^{-1} \mathds 1)=\Re\left(\frac{\nu Q_t \mathds 1}{1-r\lambda(t)}\right)+\Re(\nu(1-rE_t)^{-1}\mathds 1).
\end{equation*}
As for (\ref{maj-exterieur}), similar arguments imply that the second term in the right-hand side of the latter equality is bounded by some positive constant, uniformly with respect to $0<r<1$ and $t\in\mathcal V$, in such way that the finiteness or not of the $r$-limit depend only on the first term. Moreover, this latter integrand  can be rewritten up to the multiplicative term $|1-r\lambda(t)|^{-2}$ as 
\begin{equation*}
{\Re}(\nu Q_t\mathds 1){\Re}(1-r{\lambda(t)})-{\Im}(\nu Q_t\mathds 1) {\Im}(r\lambda(t)).
\end{equation*}
Note that  $|{\Im}(r\lambda(t))|\leq K\,{\Re}(1-r{\lambda(t)}) $ for every $0<r<1$ and $t\in\mathcal V$ by  the sector Assumption \ref{sector}. Also, remark  that $\nu Q_t\mathds 1$ converges toward $1$ as $t$ goes to $0$. Then, we deduce easily that  
\begin{equation*}
\sum_{n \geq 0} \mathbb{P}_{\nu}(Z_n=0)=\infty
\quad\Longleftrightarrow\quad
\lim_{r\uparrow 1}\int_{\mathcal V}{\Re}\left(\frac{1}{1-r\lambda(t)}\right) dt=\infty,
\end{equation*}  
for some (or any) neighbourhood of the origin for which $\lambda(t)$ is well-defined. It is worth noting that that $\nu$  disappears of the integral condition. Therefore, the necessary and sufficient recurrence criterion follows from Proposition \ref{dicho2}. This ends the proof of the Theorem.
\end{proof}

\begin{proof}[Proof of Proposition \ref{expansprop}]
We shall prove the series expansion (\ref{expans}). To this end, we continue the work initiated for (\ref{lambda}). Since $(P+H-1)R_H(\xi)=1+(\xi-1)R_H(\xi)$ one has 
\begin{equation*}
\lambda (H)-1={\rm Tr}\left(\frac{1}{2\pi i}\int_{\Gamma}(1-\xi)R_H(\xi)d\xi\right).
\end{equation*}
Besides, one has for sufficiently small perturbation,
\begin{equation*}
R_H(\xi)=R(\xi)\sum_{p\geq 0}(-HR(\xi))^p.
\end{equation*}
Here we denote $R:=R_0$ the resolvent of $P$. Then remark that one can exchange the integral and the latter series so that the problem reduces to the study of the trace associated with the absolute convergent series of bounded operators
\begin{equation}\label{integrals}
\sum_{p\geq 0}\frac{1}{2\pi i}\int_\Gamma (1-\xi)R(\xi)(-HR(\xi))^p~d\xi.
\end{equation}  
Consider the Laurent series expansion of the resolvent given by
\begin{equation}\label{Laurent}
R(\xi)=-\frac{Q}{\xi-1}+\sum_{n\geq 0}(\xi-1)^n T^{n+1}.
\end{equation}
Recall that $Q$ is the eigenprojector on ${\rm span}(\mathds 1)$ and $T$ is the operator defined in (\ref{reducedresolvent}). For more details, we refer to \cite[Chap. I.5.3.]{Kato}. Thereafter, to evaluate each integrals in (\ref{integrals}) we need to identify the principal singularity of the integrand. Using (\ref{Laurent}), it is nothing but 
\begin{equation}\label{singularity}
(-1)^{p-1}Q\left[HQ(HT)^{p-1}+(HT)HQ(HT)^{p-2}+\cdots+(HT)^{p-1}HQ\right],
\end{equation}
for any $p\geq 1$. Furthermore, since these operators are one-dimensional and the series is absolutely convergent, we get from Lemma \ref{rankone} that the trace operator and the series commute. In addition, we deduce from the formal rule ${\rm Tr}(AB)={\rm Tr}(BA)$ when $A$ or $B$ is of rank one and from the identity $TQ=0$ that the only operator in (\ref{singularity}) having a possibly non-zero trace is the last one. It is then not difficult to write its contribution as
\begin{equation*}
(-1)^{p-1}{\rm Tr}(Q(HT)^{p-1}HQ)=(-1)^{p-1}\pi(HT)^{p-1}H\mathds 1.  
\end{equation*}
Summarizing,  
\begin{equation*}
\lambda (H)-1=\sum_{n\geq 0}(-1)^{n}\pi(HT)^{n}H\mathds 1=\pi(1+HT)^{-1}H\mathds 1,
\end{equation*}
for sufficiently small perturbations. This leads to (\ref{expans}) and thus  ends the proof.
\end{proof}

\section{Proofs of Section \ref{criteriacomb}}

\label{proofcomb}

\setcounter{equation}{0}

We begin with the proof of Lemma \ref{lévysym}. Thereafter, we will be able to prove Theorem \ref{critère} applying a generalized Borel-Cantelli argument and then obtain Corollary \ref{coro}. The proof of Theorem \ref{conjecture} requires these three results and is given at the end of this section.

\begin{proof}[Proof of Lemma \ref{lévysym}] We follow and make more precise the results of \cite{Esseen1,Esseen2} connecting the behaviour of the Lévy concentration function with the integral near the origin of the characteristic function. Let $\varphi(t)$ be the characteristic function of the jumps associated with $\{M_n\}_{n\geq 0}$. Since the latter is symmetric, one can find $p>0$ such that $\varphi(t)\geq 0$ on $[-p,p]$. Let us introduce now  
\begin{equation*}
h(t):=(1-|t|)^+\quad\mbox{and}\quad H(x):=\int e^{ixt}h(t)dt=\left(\frac{\sin(x/2)}{x/2}\right)^2. 
\end{equation*}
A direct consequence of the Fourier-duality implies, for any $\lambda>0$ and $n\in\{1,2,\cdots,\}$, the following crucial identity  
\begin{equation*}\label{crucial}
\frac{\lambda}{2\pi}\int_{-{2\pi}/{\lambda}}^{{2\pi}/{\lambda}} \varphi(t)^n h(\lambda t/2\pi)e^{-it\xi} dt=\int H(2\pi(x-\xi)/\lambda)\mathbb P_{M_n}(dx).
\end{equation*}
Setting $\xi=0$ in the equality above, it follows, for any $\lambda>2\pi/p$ and $n,N\in\{1,2,\cdots\}$, that
\begin{equation}\label{crucial2}
\frac{\lambda}{4\pi}\int_{-\pi/\lambda}^{\pi/\lambda} \varphi(t)^n dt \leq 2\mathbb P\left(0\leq M_n\leq N\lambda \right)+\left(2\sum_{k\geq N}\frac{1}{\pi^2k^2}\right) Q\left(M_n,\lambda\right).
\end{equation}
To conclude, we need the following well known result. Because its proof does not appear clearly in the literature, a brief proof of this fact is given below. 


\begin{lem}\label{encadrement2}
There exist universal constants $0<m\leq M$ such that for any $\lambda>2\pi/p$ and $n\geq 1$,
\begin{equation}\label{encadrement}
m\lambda \int_{-{\pi}/{\lambda}}^{{\pi}/{\lambda}}\varphi(t)^n dt\leq Q(M_n,\lambda)\leq M\lambda \int_{-{\pi}/{\lambda}}^{{\pi}/{\lambda}}\varphi(t)^n dt.
\end{equation}
\end{lem}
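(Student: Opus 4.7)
The plan is to extract both bounds from the Fourier duality identity already established in the proof of Lemma~\ref{lévysym}, combined with two complementary smoothing arguments. The condition $\lambda>2\pi/p$ keeps us inside the positivity set of $\varphi$, which lets one estimate integrands without any absolute value.

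For the lower estimate in~\eqref{encadrement}, I would simply reuse inequality~\eqref{crucial2}: by splitting $[0,N\lambda]$ into $N$ consecutive translates of $[0,\lambda]$ one gets
\begin{equation*}
\mathbb P(0\leq M_n\leq N\lambda)\leq N\,Q(M_n,\lambda),
\end{equation*}
and substituting this into~\eqref{crucial2} yields
\begin{equation*}
\frac{\lambda}{4\pi}\int_{-\pi/\lambda}^{\pi/\lambda}\varphi(t)^n\,dt\leq \Big(2N+2\sum_{k\geq N}\frac{1}{\pi^2k^2}\Big)Q(M_n,\lambda).
\end{equation*}
Any fixed choice of $N$ (for instance $N=1$, which makes the bracket equal to $7/3$) then produces a universal constant $m>0$.

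For the upper estimate I would rescale the Fourier identity of Lemma~\ref{lévysym} by substituting $2\lambda$ for $\lambda$ --- a substitution that still satisfies the positivity condition on the narrower window $[-\pi/\lambda,\pi/\lambda]$ since $\lambda>2\pi/p$ implies $\pi/\lambda<p$ --- so as to obtain, for every $\xi\in\mathbb R$,
\begin{equation*}
\frac{\lambda}{\pi}\int_{-\pi/\lambda}^{\pi/\lambda}\varphi(t)^n h(\lambda t/\pi)e^{-it\xi}\,dt=\int H\!\big(\pi(x-\xi)/\lambda\big)\,\mathbb P_{M_n}(dx).
\end{equation*}
Evaluating at a shift of the form $\xi+\lambda/2$ keeps the kernel $H(\pi(x-\xi-\lambda/2)/\lambda)$ above $H(\pi/2)=8/\pi^2$ throughout the slab $\xi\leq x\leq\xi+\lambda$. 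Bounding the left-hand side above by $\lambda\pi^{-1}\int_{-\pi/\lambda}^{\pi/\lambda}\varphi(t)^n\,dt$ (via $\varphi^n\geq 0$ and $|h|\leq 1$) and the right-hand side below by $(8/\pi^2)\,\mathbb P(\xi\leq M_n\leq \xi+\lambda)$, then taking the supremum over $\xi\in\mathbb R$, leads to $Q(M_n,\lambda)\leq (\pi/8)\lambda\int_{-\pi/\lambda}^{\pi/\lambda}\varphi(t)^n\,dt$, i.e.\ $M=\pi/8$.

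The only mildly delicate point is verifying that the pointwise lower bound $H(u)\geq 8/\pi^2$ on $|u|\leq \pi/2$ is genuinely uniform in $\xi$ and $\lambda$; this is immediate from the explicit formula $H(u)=(\sin(u/2)/(u/2))^2$ and the monotonicity of $\mathrm{sinc}$ on $[0,\pi]$. I do not anticipate any substantive obstacle beyond this elementary check, since both constants produced by the argument are manifestly independent of $n$, $\lambda$ and $\varphi$.
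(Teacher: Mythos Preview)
Your argument is correct. The paper's own proof of this lemma is not really a proof but a pair of citations: the upper bound is read off \cite{Esseen1} (with the observation that on $[-p,p]$ one may replace $|\varphi|$ by $\varphi$), and the lower bound is obtained by adapting \cite{Esseen2}, noting that no symmetrization is needed here since $\varphi$ is already real. Your route is the same Fej\'er-kernel duality that underlies Esseen's arguments, only written out in full; the one genuine shortcut you take is to recycle inequality~\eqref{crucial2} for the lower bound, which the paper has already derived just above the lemma but does not itself invoke in the lemma's proof. This buys you a self-contained argument with explicit constants ($m=3/(28\pi)$, $M=\pi/8$), at the cost of making the lemma's proof depend on material placed inside the surrounding proof of Lemma~\ref{lévysym} rather than standing alone.
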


\begin{proof}[Proof of Lemma \ref{encadrement2}]
First, looking at \cite[p. 211]{Esseen1}, the upper bound is immediate with an absolute value around the characteristic function.  Since the latter is positive on $[-p,p]$ we get the right-hand side of (\ref{encadrement}). To get a similar lower bound, one can adapt \cite[p. 292]{Esseen2} since the proof remains valid without symmetrization. Again, we only need to care about the set of positivity of $\varphi(t)$.
\end{proof}


Thus let us choose $N$ such that the series in (\ref{crucial2}) is lower than $1/(8\pi M)$.
Then the right-hand side of (\ref{encadrement}), the inequality (\ref{crucial2}) and classical results, stated for instance at the beginning of \cite{Esseen2}, imply the inequality of Lemma \ref{lévysym} with $\lambda_0:=2\pi N/p$ and $C:=16\pi M(N+1)$.
\end{proof}


\begin{proof}[Proof of Theorem \ref{critère}]
First, it comes from the symmetry of $\{V_n\}_{n\geq 0}$ and its increments, denoted by $\{Z_n\}_{n\geq 0}$,  that it is only needed to focus on the events $E_{n}:=\{H_{n+1}=0, 0\leq V_{n}\leq Z_{n+1}\}$. Besides, denoting by $\mathcal T_V$ the right tail distribution of the  jumps of $V$,  conditioning  successively with respect to the  filtrations generated by $\{H_{k} : 1 \leq k\leq n+1\}$ and $\{(V_k,Z_k) : 1\leq k\leq n\}$, and finally applying the usual conditional Borel-Cantelli lemma, we get that
\begin{equation*}
\{E_{n},\;\mbox{i.o.}\}=\left\{\sum_{n\geq 0}\mathds 1_{\{H_{n+1}=0\}}\mathcal T_V(V_{n})\mathds 1_{\{V_{n}\geq 0\}}=\infty\right\}\quad\mbox{a.s.}.
\end{equation*}
Consequently, one can replace the $Z_{n}$ in the definition of the $E_n$ by an identically distributed sequence $\{Z_n^\perp\}_{n\geq 1}$  independent of $\{V_n\}_{n\geq 1}$ since the resulting events -- say $E_n^{\perp}$ -- lead exactly to the same criterion. Furthermore, the limit superior of these events belong to the exchangeable $\sigma$-algebra associated with an {\it i.i.d.\@} sequence of random variables in such way that the Hewitt-Savage zero-one law applies and we only need to prove that $E_n^\perp$ occur infinitely often with a positive probability.

To this end, we observe using conditional arguments and Lemma \ref{lévysym} that for any $n>k\geq 1$,
\begin{equation*}
\mathbb P(E_n^\perp\cap E_k^\perp)\leq \mathbb P(E_{k}^\perp)\mathbb E[\mathds 1_{\{H_{n-k}=0\}}\mathbb P_{V_{k}}(0\leq V_{n-k}\leq Z_{n+1}^\perp)]\leq C\,\mathbb P(E_{k}^\perp)\mathbb P(E_{n-k-1}^\perp).
\end{equation*}
Thereafter, we can conclude with a classical step -- see \cite[p. 726]{Raugi} for instance. In fact, the inequality above implies the sequence $\sum_{k=1}^n \mathds 1_{E_k^\perp}/\sum_{k=1}^n\mathbb P(E_k^\perp)$ is bounded in $\mathbb L^2$ and thus equi-integrable. Then we can apply the generalized Fatou-Lemma so that
\begin{equation*}
\mathbb E\left[\limsup_{n\to\infty}\frac{\sum_{k=1}^n \mathds 1_{E_k^\perp}}{\sum_{k=1}^n \mathbb P(E_k^\perp)}\right]\geq 1.
\end{equation*}
Therefore, if the sequence of partial sums at the denominator is unbounded, then the series at the numerator is divergent with a positive probability. This ends the proof of the series criterion  since the reciprocal implication is a straightforward consequence of the standard Borel-Cantelli Lemma.  

Regarding the Fourier-like criterion (\ref{Fourier}), remark that 
\begin{equation}
 \mathbb P(H_{n+1}=0)=\int_{\mathbb T^d} \Re(\varphi_H^{n+1}(t))\, dt\quad\mbox{and}\quad \mathbb P(0\leq V_n\leq Z_{n+1})=\sum_{k=0}^\infty\tail_V(k)\int_{\mathbb T^d} \cos(ks)\varphi_V^{n}(s)\,ds.
\end{equation}
Then, multiplying by the geometric terms $u^{n}$ and $r^k$ respectively and using standard inversion theorems, it follows the series in the criterion is infinite if and only if  
\begin{equation}
\lim_{r,u\uparrow 1}\int_{\mathbb T^d} \int_{\mathbb T^d}\Re\left(\frac{\varphi_H(t)}{1-u\varphi_{H}(t)\varphi_{V}(s)}\right)\Phi_{V}(r,s)\,ds\,dt=\infty.
\end{equation}
When $(H,V)$ is transient, it follows from the Ornstein-Chung-Fuchs  criterion  that  the $u$-limit can be remove since $1/(1-u\varphi_{H}(t)\varphi_{V}(s))$ is uniformly integrable on $\mathbb T^d\times \mathbb T^d$ for  $0<u<1$. It is then  not difficult to drop  $\varphi_H(t)$ and integrate around the origin to get the integral criterion.
\end{proof}


\begin{proof}[Proof of Corollary \ref{coro}]
First remark that the original generalized DRRW pass through the origin during either a horizontal or vertical move. Besides, it turns out that  $\{(H_n,V_n)\}_{n\geq 0}$ and $\{(H_{n+1},V_n)\}_{n\geq 0}$ are respectively the skeleton random walks associated with the horizontal-to-vertical and the vertical-to-horizontal changes of direction. Due to Theorem \ref{critère}, the recurrence of the origin follows from the divergence of one of the series in (\ref{borelcantelli}), each of them corresponding to a walker passing through the origin infinitely often during vertical or horizontal moves respectively.

Thus, it remains to prove that the convergence of both series in (\ref{borelcantelli}) leads to the transience of the origin. We only consider the first series since the other one can be treated analogously. With the settings in (\ref{geometric}), we observe that it suffices to show 
\begin{equation}\label{borelcantelli2}
\sum_{n=0}^\infty \mathbb P(H_{n+1}=0)\mathbb P\left(0\leq V_{n}\leq \max_{1\leq l \leq G_{\mathtt v}}\varepsilon A_l \right)<\infty,\quad\mbox{where}\quad A_l:=\sum_{k=1}^{l}(-1)^{k-1}\tau_{k}^{\mathtt v}. 
\end{equation}

Here, all the involved random variables are independent. In particular, applying the standard Borel-Cantelli Lemma,  we deduce from (\ref{borelcantelli2}) that the origin is not recurrent -- for a walker passing through the origin during vertical moves. Since a similar argument holds for horizontal movements, it suffices to check that the divergence of the first series in (\ref{borelcantelli})
 implies  (\ref{borelcantelli2}) to terminate the proof. Finally, this is obvious when $p_{\mathtt v}=0$, otherwise it is a consequence of the following lemma.
\begin{lem} The following estimate holds for all $n \geq 0$
\begin{equation*}\label{russe}
\mathbb P\left(0\leq V_{n}\leq \max_{1\leq l \leq G_{\mathtt v}}\varepsilon A_l  \right)\leq 2(1+p_{\mathtt v})\,\mathbb P\left(0\leq V_{n}\leq V_{n+1}-V_n \right).
\end{equation*}
\end{lem}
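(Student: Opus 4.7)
My plan is to split according to the Rademacher sign $\varepsilon$ and to the parity of the index $l$. Since $V_n$ is independent of the next-excursion data $(\varepsilon, G_{\mathtt v}, \{\tau_k^{\mathtt v}\})$ and $\varepsilon$ is symmetric, it will suffice to establish, for each fixed $v \geq 0$ and each deterministic sign choice $\varepsilon \in \{+1,-1\}$, the pointwise bound
\begin{equation*}
\mathbb P\bigl(v \leq \max_{1 \leq l \leq G_{\mathtt v}}\varepsilon A_l\bigr) \leq 2(1+p_{\mathtt v})\,\mathbb P\bigl(v \leq \varepsilon A_{G_{\mathtt v}}\bigr),
\end{equation*}
and then to integrate against the law of $V_n \mathbf{1}_{\{V_n \geq 0\}}$. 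The key structural observation is that the increments $A_l - A_{l-1} = (-1)^{l-1}\tau_l^{\mathtt v}$ alternate in sign, so that the running maximum of $A_l$ is always attained at an odd index, while the running maximum of $-A_l$ is attained at an even index (except in the extremal case $G_{\mathtt v} = 1$).

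This invites me to introduce the sub-sequences $B_j := A_{2j-1} = \tau_1^{\mathtt v} + S_{j-1}$, where $S_k := \sum_{i=1}^k(-\tau_{2i}^{\mathtt v}+\tau_{2i+1}^{\mathtt v})$ is a symmetric random walk started at $0$, and $C_j := -A_{2j} = \sum_{i=1}^j(-\tau_{2i-1}^{\mathtt v}+\tau_{2i}^{\mathtt v})$, which is itself a symmetric random walk on $\mathbb Z$ with $C_0 = 0$. Setting $M := \lceil G_{\mathtt v}/2\rceil$, the memoryless property readily yields that $M$ is geometric of parameter $1 - p_{\mathtt v}^2$ and that, conditionally on $M$, the events $\{G_{\mathtt v} = 2M-1\}$ and $\{G_{\mathtt v} = 2M\}$ have respective probabilities $1/(1+p_{\mathtt v})$ and $p_{\mathtt v}/(1+p_{\mathtt v})$.

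In the case $\varepsilon = +1$, the parity observation gives $\max_{l \leq G_{\mathtt v}} A_l = \max_{j \leq M} B_j$; the classical reflection principle applied to $S$, conditionally on $\tau_1^{\mathtt v}$, then produces $\mathbb P(v \leq \max_{j \leq M} B_j) \leq 2\,\mathbb P(v \leq B_M)$, while isolating the odd-$G_{\mathtt v}$ contribution (for which $A_{G_{\mathtt v}} = B_M$) yields $\mathbb P(v \leq A_{G_{\mathtt v}}) \geq \frac{1}{1+p_{\mathtt v}}\,\mathbb P(v \leq B_M)$. Chaining these two inequalities produces the sought factor $2(1+p_{\mathtt v})$. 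The case $\varepsilon = -1$ is handled in the same way, now using the reflection principle for the genuinely symmetric walk $C$ and isolating the even-$G_{\mathtt v}$ contribution, for which $-A_{G_{\mathtt v}} = C_M$; the marginal case $G_{\mathtt v} = 1$ contributes nothing since then $\max_l(-A_l) = -\tau_1^{\mathtt v} < 0 \leq v$.

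The main potential obstacle, to my mind, is that a naive conditional comparison (working pointwise in the whole trajectory of $\tau$'s) yields only the weaker constant $1/(1 - p_{\mathtt v})$, which diverges as $p_{\mathtt v} \uparrow 1$ and hence fails to give the bound. Replacing $G_{\mathtt v}$ by $M = \lceil G_{\mathtt v}/2\rceil$ effectively halves the parameter of the underlying geometric law, and this is precisely what converts the divergent factor $1/(1 - p_{\mathtt v})$ into the bounded one $2(1 + p_{\mathtt v})$.
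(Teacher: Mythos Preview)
Your proof is correct and follows essentially the same route as the paper: exploit the alternating signs of $A_l - A_{l-1}$ to reduce the running maximum to a subsequence indexed by one parity, apply the reflection-type maximal inequality for symmetric increments (the paper cites Petrov, Chapter~3, Theorem~10, which is exactly the Lévy inequality you use), and then recover the factor $1+p_{\mathtt v}$ from the geometric law of $G_{\mathtt v}$ by pairing $\{G_{\mathtt v}=2m-1\}$ with $\{G_{\mathtt v}=2m\}$.

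The only substantive difference is one of completeness. The paper treats just the case $\varepsilon=1$ and then writes that ``the result follows by conditioning with respect to the event $\{\varepsilon=1\}$ and the random variables $V_n$ and $G_{\mathtt v}$'', leaving the $\varepsilon=-1$ contribution implicit. You spell out both sign cases, correctly noting that for $\varepsilon=-1$ the running maximum of $-A_l$ is attained along even indices via the genuinely symmetric walk $C_j=-A_{2j}$, and that the boundary case $G_{\mathtt v}=1$ contributes nothing because $-A_1=-\tau_1^{\mathtt v}<0\leq v$. Your closing remark about why grouping by $M=\lceil G_{\mathtt v}/2\rceil$ converts the naive factor $1/(1-p_{\mathtt v})$ into the bounded factor $2(1+p_{\mathtt v})$ is a nice explanation of why the argument works uniformly in $p_{\mathtt v}$.
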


\begin{proof}
Note that $A_{2i+1}\geq A_{2i+2}$ for every $i\geq 0$.
Hence, the local maxima of $A_l$ are reached for odd indices. Since $A_{2i+1}$ can be rewritten as $A_{2i+1}=\tau_{1}^{\mathtt v}+\sum_{k=1}^i (\tau_{2k+1}^{\mathtt v}-\tau_{2k}^{\mathtt v})$, a direct application of \cite[Chapter 3, Theorem 10, p.50]{Petrov} for the symmetric increments $\tau_{2k+1}^{\mathtt v}-\tau_{2k}^{\mathtt v}$ leads to 
\begin{equation*}
\mathbb P\left(\max_{1\leq l\leq 2i+2}A_l\geq x\right)=\mathbb P\left(\max_{1\leq l\leq 2i+1}A_l\geq x\right)\leq 2 \mathbb P\left(A_{2i+1}\geq x\right),
\end{equation*} 
for every $x\in\mathbb R$.
The result then follows by conditioning with respect to the event  $\{\varepsilon=1\}$ and the random variables $V_n$ and $G_{\mathtt v}$.
\end{proof}
This ends the proof of Corollary \ref{coro}.
\end{proof}


\begin{proof}[Proof of Theorem \ref{conjecture}] We shall construct inductively appropriate jump distributions as in the concise and elegant paper  of \cite{Grey}. In this article is given a probabilistic proof of a result of \cite{Shepp}. This result states that a recurrent symmetric random walk on the line may have jump with arbitrary large tails. We also follow the clever path borrowed by the authors in \cite{Rainer2007}. We stress that we do not require unimodality assumptions making the construction more general but also handier and easier to state.\\

\noindent
{\bf Step 1.} In order to introduce the suitable distributions, we  define inductively a sequence $\{ (l_k,y_k,p_{k+1}) \}_{k\geq 1}$ where $l_k$ and $y_k$ are non-negative integers satisfying  $y_{k+1}\geq y_{k}+l_{k}$. The couple $l_k$ and $y_k$, $k \geq 1$, represent some spatial parameters explained in step 2.\@ whereas $p_{k+1}$ stands for some probability. 

The following quantities will be fixed throughout all the procedure: $(v_k)_{k\geq 2}$ and $(u_k)_{k\geq 2}$ are two sequences of positive numbers such that some fixed $\delta>0$ and $c>0$ one has for every $k\geq 2$, 
\begin{equation}\label{divergence}
v_k\underset{k\to\infty}{=}{\rm o}(u_k)\quad \mbox{and}\quad \frac{1}{v_k}<\frac{c}{k^{2+\delta}};
\end{equation}
$\alpha$ and $\beta$ are positive constant.  Then, proceed as follows: choose $y_1,l_1\geq 1$ and, given some fixed $r\in(0,1)$, choose $0<p_2<1-r$. Knowing the first $k-1$ terms of the sequence $\{(l_k,y_k,p_{k+1})\}_{k \geq 1}$,  we may choose $(l_k,y_k,p_{k+1})$ respecting the following constraints:

\begin{itemize}
\item first  choose $(l_k,y_k)$ such that $y_k\geq y_{k-1}+l_{k-1}$ and $l_k\geq 2$ for all $k$ sufficiently large with  
\begin{flalign}
\quad \mbox{\bf --}\quad  & \frac{1}{l_k^2 p_k^2}\ln\left(\frac{1}{rp_k}\right)\leq \frac{1}{v_k}\label{CondConst1}; & \\[5pt]
\mbox{\bf--}\quad & \sum_{i=1}^{k-1}p_i(y_i+l_i)^2\leq \alpha p_k(y_k+l_k)^2; \label{CondConst2}& \\[5pt]
\mbox{\bf--}\quad & \frac{p_k^2 l_k^2 y_k}{(y_k+l_k)^2}\geq u_k;\label{CondConst3}\\[5pt]
\mbox{\bf--}\quad & \frac{y_k^2}{p_k(y_k+l_k)^2}\leq \beta;\label{CondConst4}
\end{flalign}
\item in  second step, choose $p_{k+1}$ such that
\begin{flalign}
\quad \mbox{\bf--}\quad  & \frac{1}{v_k} \leq \frac{1}{l_k^2 p_k^2}\ln\left(\frac{1}{p_{k+1}}\right)\leq \frac{c}{k^{2+\delta}}. \label{CondConst5}&
\end{flalign} 
\end{itemize}

Note that such a triplet $(l_k,y_k,p_{k+1})$ does exist. Actually, it is even possible to choose  $y_k=l_1+\cdots+l_{k-1}$.  Together with \eqref{CondConst1}, the condition \eqref{CondConst5} implies $p_{k+1}\leq r p_k$. It then follows that  $p_2+p_3+\cdots$ is lower than $p_2/(1-r)<1$ so that one can choose $0\leq p_0\leq 1-p_2/(1-r)$ arbitrary  and find $0\leq p_1\leq 1$ such that
\begin{equation*}
q_k:=1-(p_0+\cdots+p_k)\xrightarrow[k\to\infty]{}0.
\end{equation*}

\noindent
{\bf Step 2.} We shall associate with the sequence $\{(l_k,y_k,p_{k+1})\}_{k \geq 1}$ a sequence of distributions defining coupled defective random walks, denoted in the sequel by $\{ W_n^k \}_{n \geq 0}$, $k \geq 1$, such that the limiting random walk $\{H_n\}_{n\geq 0}$ -- see below for the precise meaning -- obtained by these approximations  satisfies 
\begin{equation}\label{majfinal}
{\sum_{n=1}^\infty \mathbb P(H_n=0)^2}<\infty\quad\mbox{and}\quad \sum_{n=1}^\infty\mathbb P(H_n=0)\mathbb P(0\leq H_n\leq H_{n+1}-H_n)=\infty.
\end{equation}

\begin{figure}[!h]
\begin{center}
\includegraphics[width=80mm]{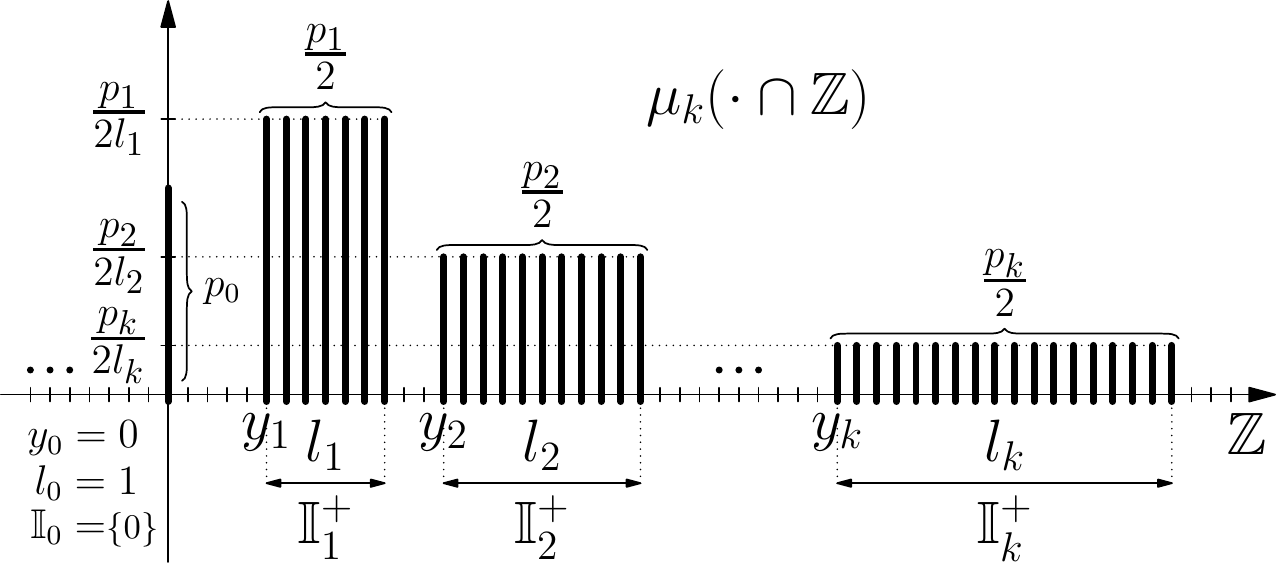}
\end{center}
\caption{\label{distrib} The $k${\rm th} symmetric distribution}
\end{figure} 

To this end, adjoin a cemetery $\Delta\notin\mathbb Z$. We set $y_0=0$, $l_0=1$ and $\mathbb I_0=\{0\}$ and for every $k\geq 1$,
\begin{equation*}
\mathbb I_k:=\mathbb I_k^+\sqcup \mathbb I_k^-\quad\mbox{with}\quad \mathbb I_k^+:=[y_k,y_k+l_k)\cap \mathbb Z\quad\mbox{and}\quad \mathbb I^-_k:=-\mathbb I_k^+.
\end{equation*}
Then we consider for every $k\geq 0$ the distribution $\mu_k$ on $\mathbb I_0\sqcup \cdots \sqcup \mathbb I_k\sqcup \{\Delta\}$ which is symmetric on $\mathbb Z$ and uniform on each $\mathbb I_0,\cdots,\mathbb I_k$ with respective masses  $p_0,\cdots,p_k$  and $\mu_k(\Delta)=q_k$ as in Figure \ref{distrib}. Note that such sequences converge in distribution to some  symmetric probability measure $\mu$ infinitely supported in $\mathbb Z$. In addition, one can choose $p_0=0$ avoiding possibly trivial jumps. For every $k\geq 0$, introduce  an {\it i.i.d.} sequence  of random variables $(X_j^{k})_{j\geq 1}$  distributed as $\mu_k$ with the following coupling properties along $k\geq 1$: 
\begin{equation*}
X_j^{k}=X_j^{k-1}\;\mbox{ on }\; \{X_j^{k-1}\neq \Delta\}\quad \mbox{and}\quad \mathbb P(X_j^{k}\in dx | X_j^{k-1}=\Delta)=\mathbb P(\xi^kU^k+(1-\xi^k)\Delta\in dx),
\end{equation*}
where $\xi^k$ is distributed as $\mathcal B(p_{k}/q_{k-1})$, $U^k$ is uniform on $\mathbb I_k$ and $\xi^k$ and $U^k$ are independent. With these sequences of jumps, associate the so-called defective random walks denoted by $\{ W_n^k \}_{n \geq 0}$  -- they fall into the cemetery as soon as one of their jumps does -- starting from the origin. It follows from the coupling properties that $W_n^{r}=W_n^k$ for every $r\geq k$ as soon as  $W_n^{k}\neq \Delta$ in such way that we can consider the non-defective almost-sure limit random walk
\begin{equation*}
H_n:=\lim_{k\to\infty} W_n^k.
\end{equation*}
Note that the latter has for jump distribution the limit $\mu$ of the $\mu_k$ and we denote in an obvious meaning by $\{X_n\}_{n\geq 1}$ the corresponding {\it i.i.d.\@} jumps. 

In the sequel, we say for two non-negative sequences $u$ and $v$ that $u_k\preceq v_k$ if there exists $c>0$ such that $u_k\leq c v_k$ for all $k$ sufficiently large and  $u_k\asymp v_k$ whenever $u_k\preceq v_k$ and $v_k\preceq u_k$. 

\begin{lem}\label{majmin}
The following estimates holds 
\begin{equation}\label{maj}
\sqrt{\sum_{n=1}^\infty \mathbb P(W_n^{k}=0)^2} \preceq \sum_{i=1}^k\frac{1}{l_i p_i}\sqrt{\ln\left({\frac{1}{p_{i+1}}}\right)}.
\end{equation}
and
\begin{equation}\label{mino}
\sum_{n=1}^\infty\mathbb P(W_n^k=0)\mathbb P(0\leq W_n^k\leq X_{n+1}^k)\succeq \frac{y_k}{(y_k+l_k)^2}\ln\left(\frac{1}{p_{k+1}}\right).
\end{equation}
\end{lem}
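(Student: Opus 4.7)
The plan is to extract both estimates from the Fourier inversion formula
\[
\mathbb P(W_n^k=0)=\frac{1}{2\pi}\int_{-\pi}^{\pi}\tilde\varphi_k(t)^n\,dt,\qquad \tilde\varphi_k(t)=\sum_{i=0}^{k}p_i\,\phi_i(t),
\]
where $\phi_i$ is the (real, symmetric) characteristic function of the uniform law on $\mathbb I_i^+\cup\mathbb I_i^-$. The preliminary scale estimates are that $\phi_i\geq 0$ and $1-\phi_i(t)\asymp(y_i+l_i)^2 t^2$ on the window $|t|\leq c/(y_i+l_i)$, which combined with condition~\eqref{CondConst2} yield
\[
\tilde\varphi_k(t)\geq 0,\qquad 1-\tilde\varphi_k(t)\asymp q_k+p_k(y_k+l_k)^2t^2,\qquad |t|\lesssim 1/(y_k+l_k).
\]
A standard local-limit computation then provides the two-sided estimate $\mathbb P(W_n^k=0)\asymp (1-q_k)^n/[\sqrt{n\,p_k}\,(y_k+l_k)]$ on the bulk range $n\gtrsim y_k^2/[p_k(y_k+l_k)^2]$, which is essentially the whole range of $n$ by~\eqref{CondConst4}.

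For the lower bound~\eqref{mino} I first observe that $X_{n+1}^k$ lies in $\mathbb I_k^+$ with probability $p_k/2$, in which case it exceeds any $0\leq w\leq y_k$; hence $\mathbb P(0\leq W_n^k\leq X_{n+1}^k)\geq(p_k/2)\,\mathbb P(0\leq W_n^k\leq y_k)$. Applying Lemma~\ref{lévysym} to the symmetric walk $W_n^k$, whose jump characteristic function is nonnegative on a window of size $\asymp 1/(y_k+l_k)$, yields $\mathbb P(0\leq W_n^k\leq y_k)\succeq y_k\cdot\mathbb P(W_n^k=0)$ as soon as $y_k$ is below the typical spread, a range again secured by~\eqref{CondConst4}. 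Substituting the local-limit estimate and summing the harmonic-geometric tail $\sum_n(1-q_k)^{2n}/n\asymp\ln(1/q_k)\asymp\ln(1/p_{k+1})$ then produces~\eqref{mino}.

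For the upper bound~\eqref{maj} I would evaluate the squared sum via the two-walk Fourier identity
\[
\sum_{n\geq 1}\mathbb P(W_n^k=0)^2=\frac{1}{(2\pi)^2}\iint_{[-\pi,\pi]^2}\frac{\tilde\varphi_k(s)\tilde\varphi_k(t)}{1-\tilde\varphi_k(s)\tilde\varphi_k(t)}\,ds\,dt,
\]
obtained by squaring the Fourier representation, using the independence of two iid copies of $W^k$, and interchanging sum and integral. Plugging the quadratic expansion into the central region $|s|,|t|\lesssim 1/(y_k+l_k)$ gives a contribution of order $\ln(p_k/q_k)/[p_k(y_k+l_k)^2]\asymp\ln(1/p_{k+1})/[p_k(y_k+l_k)^2]$. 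For the tails, I decompose $[-\pi,\pi]\setminus\{|t|\lesssim 1/(y_k+l_k)\}$ into annuli $A_i=\{1/(y_{i+1}+l_{i+1})<|t|\leq 1/(y_i+l_i)\}$; on $A_i$ the small-scale characters $\phi_j$ ($j>i$) oscillate while the large-scale ones ($j\leq i$) remain near $1$, producing an effective killing bound $|\tilde\varphi_k|\leq 1-c\,q_i$ there. A single-scale computation then provides a scale-$i$ contribution of order $\ln(1/q_i)/[p_i(y_i+l_i)^2]$; using $q_i\asymp p_{i+1}$ together with the elementary inequality $l_i^2p_i\leq(y_i+l_i)^2$ (valid since $p_i\leq 1$) upgrades this to $\ln(1/p_{i+1})/(l_i^2p_i^2)$, and Minkowski in $\ell^2(\mathbb N)$ produces~\eqref{maj}.

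The main obstacle is the upper bound: making the annular decomposition rigorous uniformly in $k$ rests on the geometric separation $y_{i+1}\geq y_i+l_i$ and on conditions~\eqref{CondConst1}--\eqref{CondConst3}, which ensure that each character $\phi_i$ is well-localized inside its own annulus $A_i$, that the small-scale residuals $\sum_{j>i}p_j\phi_j$ do not spoil the effective survival estimate, and that the resulting $\ell^2$ sum of scale contributions does not accumulate a factor growing with $k$.
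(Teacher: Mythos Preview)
Your lower-bound argument for \eqref{mino} is close in spirit to the paper's, and would go through once you are careful about one point you glossed over: $W_n^k$ is a \emph{defective} walk, so Lemma~\ref{lévysym} and local-limit estimates do not apply to it directly. The paper handles this by passing to the conditional (non-defective) walk $H_n^k$ with jump law $\mu_k(\,\cdot\mid\mathbb Z)$, applying the concentration estimates there (via Lemma~\ref{lévysym} and a Bobkov-type bound for $Q(H_n^k,\lambda)$ in terms of the variance), and then reinserting the killing factor $(1-q_k)^{n+1}$ at the end. Your sketch is morally the same but should make this conditioning explicit.

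For the upper bound \eqref{maj}, however, the paper's route is \emph{completely different} from yours, and your Fourier double-integral approach, as written, has a genuine gap. The paper never computes $\sum_n\mathbb P(W_n^k=0)^2$ globally. Instead it exploits the coupling $W_n^k=W_n^{k-1}$ on $\{W_n^{k-1}\neq\Delta\}$ to prove the \emph{pointwise} recursion
\[
\mathbb P(W_n^{k}=0)\leq \mathbb P(W_n^{k-1}=0)+\frac{c}{l_kp_k}\,\frac{(1-q_k)^n}{\sqrt n},
\]
by conditioning on the number $Z_n^k$ of jumps falling in $\mathbb I_k$, bounding the resulting uniform convolution by $l_k^{-1}\sqrt{2/m}$ (a combinatorial/polynomial-coefficient fact), and controlling $\mathbb E[\mathds 1_{Z\geq 1}/\sqrt Z]$ for a binomial $Z$. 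A single application of Minkowski in $\ell^2(\mathbb N)$ then gives \eqref{maj} directly, each layer contributing exactly $\tfrac{1}{l_ip_i}\sqrt{\ln(1/p_{i+1})}$.

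The gap in your annular scheme is the claimed bound $|\tilde\varphi_k|\leq 1-cq_i$ on $A_i$. Near the inner edge $|t|\sim 1/(y_{i+1}+l_{i+1})$ of $A_i$, all characters $\phi_j$ with $j\leq i$ are close to $1$, and $\phi_{i+1}$ is not yet oscillating, so $\tilde\varphi_k(t)$ can be as large as $1-q_{i+1}$ rather than $1-q_i$; the ``effective killing'' you need does not hold pointwise on the annulus. Even granting a corrected bound, your sketch says nothing about the cross regions $s\in A_i$, $t\in A_j$ with $i\neq j$ in the double integral, and these carry the bulk of the mass; handling them uniformly in $k$ would require substantially more work than the coupling argument. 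The paper's recursion sidesteps all of this by peeling off one scale at a time probabilistically rather than spectrally.
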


\begin{proof}
 We begin with the inequality (\ref{maj}). First, we shall prove that 
\begin{equation}\label{recurence}
\mathbb P(W_n^{k}=0)\leq \mathbb P(W_n^{k-1}=0)+
\frac{1}{l_{k}}\frac{\omega(p_{k}/(1-q_{k}))}{\sqrt{n(p_{k}/(1-q_{k}))}}(1-q_{k})^n,
\end{equation}
where $\omega : (0,1)\longrightarrow (0,\infty)$ satisfies 
\begin{equation}\label{omega}
\omega(p):=\min_{u\in(0,1)}\left(\sqrt{\frac{2}{u}}+\sqrt{\frac{1}{2e p}}\frac{1}{1-u}\right)\underset{p\to 0}{\sim} \sqrt{\frac{1}{{2ep}}}.
\end{equation}
To this end, note that $\mathbb P(W_n^{k}=0)=\mathbb P(W_n^{k-1}=0)+\mathbb P(W_n^{k}=0,W_n^{k-1}=\Delta)$. Then, let $\mathbb Q$ be the conditional probability given the event $\{X_1^{k},\cdots,X_n^{k}\neq \Delta\}$ -- of $\mathbb P$-measure equal to $(1-q_{k})^n$. We can write
\begin{equation}\label{EqRecPWnk}
\mathbb P(W_n^{k}=0,W_n^{k-1}=\Delta)=(1-q_{k})^n\sum_{m=1}^n\mathbb Q(W_n^{k}=0|Z_n^{k}=m)\mathbb Q(Z_n^{k}=m),
\end{equation}
where 
\begin{equation*}
Z_n^{k}={\rm card}\left\{1\leq i\leq n : X_i^{k}\in \mathbb I_{k} \right\}.
\end{equation*}
Besides, given one of the $\binom{n}{m}$ partitions $J\sqcup I$ of $[1,n]\cap \mathbb Z$ with ${\rm card}(J)=m$, we can define
\begin{equation}
F_J:=\left(\bigcap_{j\in J} \left\{X_j^{k}\in \mathbb I_{k}\right\}\right)\cap\left(\bigcap_{i\in I}\left\{X_i^{k}\notin \mathbb I_{k} \right\}\right)\subset \{Z_n^{k}=m\}.
\end{equation}
All these  events form a partition of $\{Z_n^{k}=m\}$ itself. Under $\mathbb Q(\star |F_J)$ the random variables $X_j^{k}$ are independent. In addition, for every $j\in J$, $X_j^k$ can be written as $\theta_j Y_j$  where the $\{ Y_j \}_{j \in J}$ and the $\{ \theta_j \}$ are independent family of {\it i.i.d.\@} random variables uniformly distributed on $\mathbb I_{k}^+$ and $\{\pm 1\}$ respectively. Also, observe that under $\mathbb Q$ the random variable $Z_n^{k}$ is binomial of parameters $n$ and $p_{k}/(1-q_{k})$. Finally, lemma \ref{majmin} follows from the two technical lemmas below.


\begin{lem}\label{unif}
Let $\{ Y_j \}_{j\geq 1}$ be a sequence of independent random variables distributed uniformly on integers intervals of length $l\geq 2$. Then for every $m\geq 1$ one has 
\begin{equation}
\sup_{x\in\mathbb Z}\mathbb P(Y_1+\cdots+Y_m=x)\leq \frac{1}{l}\sqrt{\frac{2}{m}}.
\end{equation}
\end{lem}

\begin{lem}\label{binom} Let $Z$ be a random variable distributed as $\mathcal B(n,p)$. Then 
 \begin{equation}
\mathbb E\left[\frac{\mathds 1_{\left\{Z\geq 1\right\}}}{\sqrt{Z}}\right]\leq \frac{\omega(p)}{\sqrt{2np}},
\end{equation}
where $\omega : (0,1)\longrightarrow (0,\infty)$ is defined in (\ref{omega}).
\end{lem}

\begin{proof}[Proof of Lemmas \ref{unif} and \ref{binom}]
  The proof of Lemma \ref{unif} in \cite[pp. 697-698]{Rainer2007} contains some misunderstandings. To overcome these difficulties, we refer to \cite{roos}. This probabilistic estimate relies on combinatorics considerations, the so called polynomial coefficients. To go further, we allude  for instance to \cite[Section 1.16., pp. 77-78]{Comtet}.

  For Lemma  \ref{binom}, the inequality follows from the upper bound 
\begin{equation*}
\mathbb E\left[\frac{\mathds 1_{\left\{Z\geq 1\right\}}}{\sqrt{Z}}\right]\leq \frac{1}{\sqrt{npu}}\left[1+\sqrt{npu} e^{-2n(1-u)^2 p^2}\right],
\end{equation*}
obtained with a truncation argument along $\{Z\geq npu\}$ for any $u\in(0,1)$ and the Hoeffding's inequality. Since $x \exp(-x^2)\leq 1/\sqrt{2e}$ and $u\in(0,1)$ is arbitrary the result follows.
\end{proof}


After conditioning with respect to the $\theta_j$, we can apply Lemma \ref{unif} to our situation and we obtain for any $J$,  
\begin{equation*}
\mathbb Q(W_n^{k}=0|F_J)\leq \frac{1}{l_{k}}\sqrt{\frac{2}{m}},\quad\mbox{and thus}\quad 
\mathbb Q(W_n^{k}=0|Z_n^{k}=m)\leq \frac{1}{l_{k}}\sqrt{\frac{2}{m}}.
\end{equation*}
Lemma \ref{binom} and \eqref{EqRecPWnk} then imply \eqref{recurence}. Since the $p_k$ and the $q_k$ go to zero, we get for all $k$ large enough
\begin{equation}\label{recurence2}
\mathbb P(W_n^{k}=0)\leq \mathbb P(W_n^{k-1}=0)+\frac{1}{l_kp_k}{\frac{(1-q_k)^n}{\sqrt n}}.
\end{equation}
Then the upper bound \eqref{maj} follows by induction from the Minkowski inequality, the series expansion of $\ln(1+x)$ near the origin and the inequality $q_k\geq p_{k+1}$. 

It remains to prove the lower bound (\ref{mino}). 
In the sequel, we denote by $(H_n^k)_{n\geq 1}$ the random walk on $\mathbb Z$ whose common distribution of the \emph{i.i.d.} jumps, denoted by $\{ Y_n^k \}_{n \geq 0}$, is given by the conditional law $\mu_k(\star |\mathbb Z)$. The corresponding characteristic function is denoted by $\varphi_k$. It is worth noting that, under the probability measure $\mathbb Q$, $W_n^k$ is distributed as $H_n^k$  and $X_1^k$ is distributed as $Y_1^k$. If $\varphi$ stands for the characteristic function of $\mu$, then 
\begin{equation}\label{approx}
\|\varphi-\varphi_k\|_\infty\leq 2q_k\xrightarrow[k\to\infty]{} 0.
\end{equation}
Since $\varphi$ and the $\varphi_k$ are continuous and non-negative with $\varphi(0)=1$, there exists $p>0$ such that $\varphi_k(t)\geq 0$ for $k$ large enough and every $t\in [-p,p]$. Hence, Lemma \ref{lévysym} provides positive universal constants $C,L$ such that for all $k$ sufficiently large and every $\lambda\geq L/p$ and $n\geq 1$,
\begin{equation*}
\mathbb P(0\leq H_n^k\leq \lambda)\geq \frac{Q(H_n^k,\lambda)}{C}.
\end{equation*}
We will apply \cite[Theorem 1.1]{Bobkov} but before we shall give a lower and an upper estimate of the variance of $X_1^k$ given $X_1^k\in\mathbb Z$, denoted below by $\sigma_k^2$. Using \eqref{CondConst2} and the fact that $1/2\leq 1-q_k\leq 1$ for all $k$ sufficiently large, it is not difficult to see that for such $k$ 
\begin{equation}\label{variance2}
\frac{1}{3}p_k (y_k+l_k-1)^2\leq  \sigma_k^2 \leq 2 (\alpha+1)p_k(y_k+l_k)^2.
\end{equation} 
Therefore, looking at the event $\{X_n^k\geq y_k\}$, \cite[Theorem 1.1]{Bobkov} and \eqref{CondConst4} imply there exists a positive constant $\gamma$ such that for every $n\geq 1$ and $k$ sufficiently large  
\begin{equation}\label{mino1}
\mathbb P(0\leq H_n^k\leq Y_{n+1}^k)\geq \frac{\gamma p_k y_k}{\sqrt{np_k(y_k+l_k)^2}}.
\end{equation}
Since $\varphi$ is the characteristic function of an aperiodic symmetric distribution, we deduce from (\ref{approx}) that there exist $\delta\in(0,1)$ and a neighbourhood $\mathcal V$ of the origin for which $|\varphi_k(t)|\leq \delta$ for every $t\in \mathbb T^1\setminus \mathcal V$ and $k$ large enough. Lemma \ref{encadrement2} and the lower bound in  \cite{Bobkov} apply once again and we obtain from  \eqref{CondConst4} the existence of a positive constant $\theta$ such that for all $k$ large enough and every $n\geq 1$,
\begin{equation}\label{mino2}
\mathbb P(H_n^k=0)\geq \frac{\theta}{\sqrt{np_k(y_k+l_k)^2}}-\theta \delta^n.
\end{equation}
Remark that the two latter lower bounds hold for the defective random walks $W_n^k$ by adding the multiplicative term $(1-q_k)^{n+1}$. Finally, one deduce (\ref{mino}) noting that $q_k\leq p_{k+1}/(1-r)$. 
\end{proof}

Therefore, letting $k\longrightarrow \infty$ in (\ref{maj}) and (\ref{mino}) and using conditions (\ref{divergence}) and (\ref{CondConst5}), we obtain (\ref{majfinal}) by  the monotone convergence theorem. From now, one choose such a distribution $\mu$ with $p_0=0$ and consider its positive part $\nu$ normalized to be a probability. It is nothing but the distribution of $|X|$ when $X$ is distributed as $\mu$. Then one can see with the help of Corollary  \ref{coro} that the latter is the waiting times distribution announced in Theorem \ref{conjecture} but only for non-backtracking DRRW. For original DRRW we still have to work.\\

\noindent
{\bf Step 3.} We shall define the appropriate waiting times distribution for DRRW. To this end,  choose  again $\nu$ as above and let us prove it is again the desired waiting time distribution. 

We consider a geometric random variable $G$ with parameter $2/3$, a symmetric Rademacher random variable $\varepsilon$ and a  sequence of {\it i.i.d.\@} random variables $\{\tau_{k}\}_{k\geq 1}$ distributed as $\nu$ -- all these random variables are supposed independent of each others. Finally, introduce the random walk $\{H_n^\prime\}_{n\geq 0}$ whose jumps  are distributed as 
\begin{equation*}
\varepsilon\sum_{k=1}^G(-1)^{k-1}\tau_k.
\end{equation*} 
We claim that $\{H'_n\}_{n\geq 0}$ satisfies the same estimates as $\{H_n\}_{n\geq 0}$ in (\ref{majfinal}). Admitting this affirmation, it is again straightforward to deduce Theorem \ref{conjecture} for true DRRW using Corollary \ref{coro}. 

In order to complete the reasoning, look carefully at the proof of Lemma \ref{majmin} above, especially when the defective jumps $\{X_n^k\}_{n\geq 1}$ are replaced by {\it i.i.d.\@} random variables distributed as 
\begin{equation}\label{saut}
\varepsilon_{n}\sum_{i=1}^{G_n}(-1)^{i-1} \tau_{n,i}^k,
\end{equation}  
where $\{\tau_{n,i}^k\}_{n,i\geq 1}$ is an {\it i.i.d.\@} array of random variables distributed as $\nu_k(dt):=\mu_k(dt |\mathbb{N}\cup \{\Delta\})$. The other random variables are defined similarly to those appearing in Lemma \ref{majmin}, all of them are independent of each others. Conditioning with respect to $G_1,\ldots,G_n$  the upper-bound (\ref{recurence2}) still holds since 
\begin{equation*}\label{recurence3}
{\frac{(1-q_k)^{G_1+\cdots+G_n}}{\sqrt {G_1+\cdots+G_n}}}\leq 
{\frac{(1-q_k)^{n}}{\sqrt n}}\quad a.s.\mbox{.}
\end{equation*}
It turns out that the limit random walk $\{H'_n\}_{n\geq 0}$ satisfies the left-hand side of (\ref{majfinal}) and it remains to show the lower bound. 

Let us remark that the variance of the jump distribution in (\ref{saut}) still satisfies the former lower and upper bounds in (\ref{variance2}) with possibly different universal constants. The lower bound is straightforward since $G_n \geq 1$ for all $n \geq 1$ whereas for the upper bound we distinguish between even $G_n$ and odd $G_n$ and use basic conditional arguments. Furthermore, similar arguments apply to the characteristic functions since 
\begin{equation*}
\Phi_k(t)=\sum_{i=0}^\infty \frac{1}{3}\left(\frac{2}{3}\right)^{2i}|\phi_{k}(t)|^{2i}\varphi_k(t)+\sum_{i=1}^\infty \frac{1}{3}\left(\frac{2}{3}\right)^{2i-1}|\phi_{k}(t)|^{2i},
\end{equation*}
where $\varphi_k$, $\phi_k$ and $\Phi_k$ are respectively the characteristic functions of $\mu_k(dx|\mathbb Z)$, $\nu_k(dt|\mathbb N)$ and the random variable (\ref{saut}), given it does not fall into the cemetery point. It follows that the lower bound (\ref{mino1}) is true for this new distribution (we suppose $G_{n+1}=1$ into the jump $n+1$) but also the lower bound (\ref{mino2}). Obviously, the constant are possibly different. To conclude, it suffices to the take the conditional expectation with respect to $G_1,\cdots,G_{n+1}$ in the multiplicative additional terms and observe by independence and the Jensen inequality that
\begin{equation*}
\mathbb E\left[(1-q_k)^{G_1+\cdots+G_{n+1}}\right]\geq (1-q_k)^{3(n+1)/2}.
\end{equation*}
Since the additional factor $3/2$ does not change the nature of the series, we deduce that the second series of (\ref{majfinal}) is also infinite for $\{H'_n\}_{n\geq 0}$.
\end{proof}

\begin{ack}
We warmly thank Arnaud Le Ny, who introduced us to these challenging persistent random walks, for our valuable discussions and his infinite patience. We would like to thank also all people managing very important tools for french mathematicians: Mathrice which provides a large number of
services.
\end{ack}

\bibliographystyle{unsrt}
\bibliography{biblio-vnby}

\end{document}